\title{\bf On the development of Bohr's phenomenon in the context of Quaternionic analysis and related problems\footnote{This article will be published in the Proceedings of the 17-th ICFIDCAA 2009 in Ho Chi Minh City.}}
\author{K. G\"urlebeck\thanks{Bauhaus-Universit\"at Weimar, Institut f\"ur Mathematik/Physik, Coudraystr. 13B, D-99421 Weimar, Germany. Email: klaus.guerlebeck@uni-weimar.de} \, and 
J. Morais\thanks{Bauhaus-Universit\"at Weimar, Institut f\"ur Mathematik/Physik, Coudraystr. 13B, D-99421 Weimar, Germany. Email: jmorais@mat.ua.pt}}
\date{}
\newtheorem{Theorem}{Theorem}[section]
\newtheorem{Lemma}{Lemma}[section]
\newtheorem{Definition}{Definition}[section]
\newtheorem{Remark}{Remark}[section]
\newtheorem{Proposition}{Proposition}[section]
\newtheorem{Corollary}{Corollary}[section]
\begin{document}
\maketitle
\begin{abstract}
The Bohr theorem states that any function $f(z) = \sum_{n=0}^{\infty} a_{n} z^{n}$, analytic and bounded in the open unit disk, obeys the inequality $\sum_{n=0}^{\infty} |a_{n}| |z|^{n} < 1$ in the open disk of radius $1/3$, the so-called Bohr radius. Moreover, the value $1/3$ cannot be improved. In this paper we review some results related to this theorem for the three-dimensional Euclidean space in the setting of quaternionic analysis. The existing results for the Bohr radius will be improved and also some estimates for the hypercomplex derivative of a monogenic function by the norm of the function will be proved.
\end{abstract}

\noindent {\bf Keywords}: {\small Quaternionic analysis, Riesz System, Bohr's phenomenon.}

\noindent {\bf MSC Subject-Classification}: 30G35

\section{Introduction}

Quaternionic analysis can be regarded as a higher-dimensional generalization of the classical theory of holomorphic functions of one complex variable. It is particularly adequate to treat three- and four-dimensional structures. Meanwhile this theory has developed into an extensive mathematical theory having connections with boundary value problems and partial differential equations theory or other fields of physics and engineering (see e.g. \cite{GS1989,GS1997,KS1996,K2003,ShapiroVasilevski11995,ShapiroVasilevski21995} and elsewhere).

It is truly uncommon that a paper that has been set aside for almost a century finds its way back to scientific spotlight. Yet this is exactly what happened in the last decade to the short paper of H. Bohr that was published in 1914 \cite{HBohr1914}. As a matter of fact, in the last decade much effort has been put on the treatment of multi-dimensional analogues and other generalizations of Bohr's theorem. We highlight the contributions by S. Dineen and R. Timoney \cite{DineenTimoney1991}, H. Boas and D. Khavinson \cite{HD1997}, L. Aizenberg \cite{Aizenberg2000,Aizenberg2005}, L. Aizenberg, A. Aytuna, and P. Djakov \cite{AAD2000}, L. Aizenberg and N. Tarkhanov \cite{AAD2001}, V. Paulsen, G. Popescu, and D. Singh \cite{VGD2002}, C. Beneteau, A. Dahlner, and D. Khavinson \cite{BDK2004}, H. Kaptanoglu and N. Sadik \cite{KaptanogluSadik2005}, H. Kaptanoglu \cite{Kaptanoglu2006}, among others. The reference list does not claim to be complete and further references can be found in \cite{Kaptanoglu2006} and in the book \cite{KresinVladimir2007}. Note that first results on a higher dimensional counterpart of Bohr's theorem were recently obtained in the context of quaternionic analysis in \cite{GueJoao22007,GueJoaoBohr2} and \cite{JoaoThesis2009}. These first results show the possibility to generalize this theorem for monogenic functions in the unit ball of the Euclidean space $\mathbb{R}^3$ but the quality of the results was not satisfying. The values of the Bohr radius were very weak and the related inequality could only be proved in balls of radius $1/2$.

An effective way of proving Bohr's phenomenon in the complex case usually needs to establish some estimates of the Fourier coefficients of the holomorphic function by the first Fourier coefficient. This idea was also applied in the three-dimensional case in some recent papers by using properties of a convenient system of homogeneous monogenic polynomials, deeply exploited in \cite{JoaoThesis2009}; but see also Lemma \ref{Fouriercoeff} below. Another key idea in \cite{AAD2000} and \cite{AAD2001} was to use functions with positive real part and obtain a related inequality. This method has similarities with the well known Borel-Carath\'eodory's inequality. In fact, Borel-Carath\'eodory-type inequalities for Laurent series and multiple power series were essential ingredients in establishing the Bohr phenomenon in \cite{KaptanogluSadik2005} and \cite{Aizenberg2005}. Here, and analogously to the original formulation of Bohr's theorem, we consider the first approach. The present paper is restricted to the case of monogenic functions defined in the unit ball of $\mathbb{R}^3$ with values in the reduced quaternions (identified with $\mathbb{R}^3$). This class of functions coincides with the solutions of the well known Riesz system and shows more analogies to complex holomorphic functions than the more general class of quaternion-valued monogenic functions.

The paper is organized as follows. After a short Section 2 on notations, we present in Section 3 an orthonormal polynomial system of homogeneous monogenic polynomials defined in $\mathbb{R}^3$ with values in the reduced quaternions. Based on this system we find better estimates for the Bohr radius than in \cite{GueJoao22007} and \cite{GueJoaoBohr2}. Moreover, we use the results in Section 5 to establish estimates of the Euclidean norm of the (hypercomplex) derivative of a monogenic function $f$ by its supremum norm in the unit ball and $f(0)$.

\section{Basic notions}

In what follows we will work in $\mathbb{H}$, the skew field of quaternions. This means we can write each element $\textbf{a} \in \mathbb{H}$ in the form
\begin{eqnarray*}
\textbf{a}=a_0 + a_1 \textbf{e}_1+a_2 \textbf{e}_2+a_3 \textbf{e}_3, \, a_i \in \mathbb{R}, \, i=0,1,2,3
\end{eqnarray*}
where the imaginary units $\textbf{e}_i$ ($i=1,2,3$) stand for the elements of the basis of $\mathbb{H}$, subject to the multiplication rules
\begin{eqnarray*} 
&& \textbf{e}_1^2 = \textbf{e}_2^2 = \textbf{e}_3^2 = -1, \\
&& \textbf{e}_1 \textbf{e}_2 = \textbf{e}_3 = - \textbf{e}_2 \textbf{e}_1, \,\,\,\,\,
\textbf{e}_2 \textbf{e}_3 = \textbf{e}_1 = - \textbf{e}_3 \textbf{e}_2, \,\,\,\,\,
\textbf{e}_3 \textbf{e}_1 = \textbf{e}_2 = - \textbf{e}_1 \textbf{e}_3 .
\end{eqnarray*}
In this way the quaternionic algebra arises as a natural extension of the complex field $\mathbb{C}$. We further denote by $\mathcal{A} := {\rm span}_{\mathbb{R}}\{1,\mathbf{e}_1,\mathbf{e}_2\} \subset \mathbb{H}$ the space of reduced quaternions $\mathbf{x} := x_0 + x_1 \textbf{e}_1 + x_2 \textbf{e}_2$, which will be identified with $\mathbb{R}^3$. As a matter of fact, throughout the text we will often use the symbol $x$ to represent a point in $\mathbb{R}^3$ and $\mathbf{x}$ to represent the corresponding reduced quaternion. Also, we emphasize that $\mathcal{A}$ is a real vectorial subspace, but not a sub-algebra, of $\mathbb{H}$. The scalar of $\mathbf{x}$ is defined by $\mathbf{Sc}(\mathbf{x})=x_0$ and the vector part by $\textbf{Vec}(\mathbf{x}):=\underline{x}=x_1 \mathbf{e}_1 + x_2 \mathbf{e}_2$. Like in the complex case, the conjugate of $\mathbf{x}$ is the reduced quaternion $\overline{\mathbf{x}} = x_0 - x_1 \mathbf{e}_1 - x_2 \mathbf{e}_2$. The norm of $\mathbf{x}$ is given by $|\mathbf{x}| = \sqrt{\mathbf{x} \overline{\mathbf{x}}}=\sqrt{x_0^2+x_1^2+x_2^2}$, and coincides with its corresponding Euclidean norm, as a vector in $\mathbb{R}^3$.

\medskip

Let $\Omega$ be an open subset of $\mathbb{R}^3$ with a piecewise smooth boundary. It is evident that a reduced quaternion-valued function or, briefly, an $\mathcal{A}$-valued function is a mapping $\mathbf{f} : \Omega \longrightarrow \mathcal{A}$ which can be written as
\begin{eqnarray*}
\mathbf{f}(x) = [\mathbf{f}(x)]_0 + [\mathbf{f}(x)]_1 \textbf{e}_1 + [\mathbf{f}(x)]_2 \textbf{e}_2, \,\,\, x \in \Omega,
\end{eqnarray*}
where $[\mathbf{f}]_i$ $(i=0,1,2)$ are real-valued functions defined in $\Omega$. Properties such as continuity, differentiability, 
integrability, and so on, are ascribed coordinate-wise. We further introduce the real-linear Hilbert space of square integrable $\mathcal{A}$-valued functions defined in $\Omega$, that we denote by $L_2(\Omega;\mathcal{A};\mathbb{R})$. The real-valued inner product is defined by
\begin{eqnarray} \label{InnerProduct}
<\mathbf{f},\mathbf{g}>_{L_2(\Omega;\mathcal{A};\mathbb{R})} \, = \int_{\Omega}\;{\textbf{Sc}}({\overline{\mathbf{f}}\,\mathbf{g}) \, dV} \,,
\end{eqnarray}
where $dV$ denotes the Lebesgue measure in $\mathbb{R}^3$. This $\mathbb{R}$-valued inner product appeared for example in \cite{Davis1963} in the context of complex vector spaces, in \cite{GS1989} for spaces of quaternion-valued functions and it was also considered in \cite{BDS1982} for spaces of Clifford algebra-valued functions.

For continuously real-differentiable functions $\mathbf{f}:\Omega \longrightarrow \mathcal{A}$, the differential operator
\begin{eqnarray*}
D = \frac{\partial}{\partial x_0} + \textbf{e}_1\,\frac{\partial}{\partial x_1} + \textbf{e}_2\,\frac{\partial}{\partial x_2}
\end{eqnarray*}
is called generalized Cauchy-Riemann operator on $\mathbb{R}^3$ and it plays a central role in the sequel. In fact, note that
\begin{eqnarray*}
\Delta_3 \mathbf{f} = D \overline{D} \mathbf{f} = \overline{D} D \mathbf{f}, 
\end{eqnarray*}
whenever $\mathbf{f} \in C^2$, where $\Delta_3$ is the $3$-dimen\-sional Laplace operator and
\begin{eqnarray*}
\overline{D} = \frac{\partial}{\partial x_0} - \textbf{e}_1\,\frac{\partial}{\partial x_1} - \textbf{e}_2\,\frac{\partial}{\partial x_2}
\end{eqnarray*}
is the conjugate generalized Cauchy-Riemann operator.

The main objects in quaternionic analysis are the so-called monogenic functions which are the null-solutions of the operator $D$. More precisely, a continuously real-differentiable $\mathcal{A}$-valued function $\mathbf{f}$ in $\Omega$ is called monogenic in $\Omega$ if it satisfies the equation $D\mathbf{f}=0$ in $\Omega$.

\begin{Definition} {\rm (Hypercomplex derivative, see \cite{GueMal1999,MitelmanShapiro1995,Sud1979})}
Let $\mathbf{f}$ be an $\mathcal{A}$-valued monogenic function in $\Omega$. The expression $(\frac{1}{2} \overline{D}) \mathbf{f}$ is called hypercomplex derivative of $\mathbf{f}$ in $\Omega$.
\end{Definition}

\begin{Definition} {\rm (Hyperholomorphic constant)}
An $\mathcal{A}$-valued monogenic function $\mathbf{f}$ with an identically vanishing hypercomplex derivative is called hyperholomorphic constant.
\end{Definition}

\begin{Remark}
We observe that any monogenic $\mathcal{A}$-valued function is two-sided monogenic. This means that it satisfies simultaneously the equations $D\mathbf{f}=\mathbf{f}D=0$, which are equivalent to the system
\begin{eqnarray*}
(R) \,\,\, \left\{ \begin{array} {ccc}
{\rm div} \hspace{0.1cm} \overline{\mathbf{f}} &=& 0 \\ 
{\rm curl} \hspace{0.1cm} \overline{\mathbf{f}} &=& 0
\end{array}\right..
\end{eqnarray*}

As is well known, the system $(R)$ is called Riesz system \cite{Riesz1958}. It clearly generalizes the classical Cauchy-Riemann system for holomorphic functions in the complex plane.
\end{Remark}

Following \cite{Leutwiler2001}, the solutions of the system (R) are called (R)-solutions. The subspace $L_2(\Omega; \mathcal{A}; \mathbb{R})\cap \ker D$ of polynomial (R)-solutions of degree $n$ is denoted by $\mathcal{M}^+(\Omega;\mathcal{A};n)$. In \cite{Leutwiler2001}, it is shown that the space $\mathcal{M}^+(\Omega;\mathcal{A};n)$ has dimension $2n+3$. We denote further by $\mathcal{M}^+(\Omega;\mathcal{A}):=L_2(\Omega;\mathcal{A};\mathbb{R})$ the space of square integrable $\mathcal{A}$-valued monogenic functions defined in $\Omega$ and by $B := B_1(0)$ the unit ball in $ \mathbb{R}^3$ centered at the origin.

\section{Special homogeneous monogenic polynomials as solutions of the Riesz system in $\mathbb{R}^3$}

It has been shown in \cite{GueJoaoComplexVariables} that there exists a set of special $\mathcal{A}$-valued monogenic polynomials
\begin{eqnarray} \label{HMP}
\{ \mathbf{X}^{l,\dagger}_n, ~ \mathbf{Y}^{m,\dagger}_n : \, l=0,\ldots,n+1, \, m=1,\ldots,n+1 \},
\end{eqnarray}
which is preserving almost all properties of the powers of $z$ with positive exponent as we will see in the following list. In particular, it is shown that the system is analogous to the complex case of the Fourier exponential functions $\{e^{i n \theta}\}_{n \geq 0}$ on the unit circle and constitutes an extension of the role of the well known Legendre and Chebyshev polynomials. The explicit expressions of the mentioned polynomials are given shortly by
\begin{eqnarray*}
&& \mathbf{X}^{l,\dagger}_n(x) \, := \, |x|^n \left\{\frac{(n+l+1)}{2}  P^l_n \left(\frac{x_0}{|x|}\right) \, 
T_l \left(\frac{x_1}{\sqrt{x_1^2+x_2^2}}\right) \right. \\
&+& \frac{1}{4} P^{l+1}_n \left(\frac{x_0}{|x|}\right) \left[ T_{l+1} \left(\frac{x_1}{\sqrt{x_1^2+x_2^2}}\right) \mathbf{e}_1 
+ \frac{x_2}{\sqrt{x_1^2+x_2^2}} \, U_l \left(\frac{x_1}{\sqrt{x_1^2+x_2^2}}\right) \mathbf{e}_2 \right] \\
&+& \left. \frac{1}{4} (n+l+1)(n+l) P^{l-1}_n \left(\frac{x_0}{|x|}\right) \left[ 
- T_{l-1} \left(\frac{x_1}{\sqrt{x_1^2+x_2^2}}\right) \mathbf{e}_1\right.\right.\\
&+& \left.\left.\frac{x_2}{\sqrt{x_1^2+x_2^2}} \, U_{l-2} \left(\frac{x_1}{\sqrt{x_1^2+x_2^2}}\right) \mathbf{e}_2 \right] \right\}
\end{eqnarray*}
and
\begin{eqnarray*}
&& \mathbf{Y}^{m,\dagger}_n(x) \, := \, |x|^n \left\{\frac{(n+m+1)}{2}  P^m_n \left(\frac{x_0}{|x|}\right) \frac{x_2}{\sqrt{x_1^2+x_2^2}} \, U_{m-1} \left(\frac{x_1}{\sqrt{x_1^2+x_2^2}}\right) \right. \\
&+& \frac{1}{4} P^{m+1}_n \left(\frac{x_0}{|x|}\right) \left[ \frac{x_2}{\sqrt{x_1^2+x_2^2}} \, 
U_m \left(\frac{x_1}{\sqrt{x_1^2+x_2^2}}\right) \mathbf{e}_1 - T_{m+1} \left(\frac{x_1}{\sqrt{x_1^2+x_2^2}}\right) \mathbf{e}_2 \right] \\
&-& \left. \frac{1}{4} (n+m+1)(n+m) P^{m-1}_n \left(\frac{x_0}{|x|}\right) \left[ \frac{x_2}{\sqrt{x_1^2+x_2^2}} \, U_{m-2} \left(\frac{x_1}{\sqrt{x_1^2+x_2^2}}\right) \mathbf{e}_1\right.\right. \\ &+& 
\left.\left. T_{m-1} \left(\frac{x_1}{\sqrt{x_1^2+x_2^2}}\right) \mathbf{e}_2 \right] \right\}.
\end{eqnarray*}

Here, $P^l_n$ stands for the associated Legendre functions of degree $n$, $T_l$ and $U_l$ are the Chebyshev polynomials of the first and second kinds, respectively. For a more unified formulation we remind the reader that whenever $l=0$ the corresponding associated Legendre function $P^0_n$ coincides with the Legendre polynomial $P_n$, the associated Legendre function $P^{-1}_n$ is defined by $P^{-1}_n = - \frac{1}{n(n+1)} P^1_n$, and $P^l_n$ are the zero function for $l \geq n+1$. The fundamental references for these polynomials and their properties are \cite{JoaoThesis2009} and \cite{GueJoaoComplexVariables}. The same system was already introduced in \cite{DissCacao} using another approach but without discussing the connections to the Chebyshev polynomials.  

In detail, in \cite{JoaoThesis2009} and \cite{GueJoaoComplexVariables} the following properties have been proved:
\begin{enumerate}
\item The functions $\mathbf{X}^{l,\dagger}_n, \mathbf{Y}^{m,\dagger}_n$ are homogeneous monogenic polynomials;
\item Homogeneous monogenic polynomials of different order are orthogonal in $L_2(B;\mathcal{A};\mathbb{R})$;
\item $\mathbf{X}^{l,\dagger}_n(0) = \mathbf{Y}^{m,\dagger}_n(0) = \mathbf{0}$ for each $n= 1,\dots$;
\item The scalar parts of the basis functions form an orthogonal system;
\item The coordinates of the basis functions are mutually orthogonal in $L_2(B)$;
\item The gradients of the scalar parts of the basis functions are mutually orthogonal in $L_2(B;\mathcal{A};\mathbb{R})$;
\item The gradients of the scalar, $\mathbf{e}_1$ and $\mathbf{e}_2$-parts of the basis functions are mutually orthogonal in 
$L_2(B;\mathcal{A};\mathbb{R})$;
\item For $n \geq 1$, we have $(\frac{1}{2} \overline{D}) \mathbf{X}_n^{l,\dagger} = (n+l+1) \mathbf{X}_{n-1}^{l,\dagger}$ $(l=0,\ldots,n)$ and $(\frac{1}{2} \overline{D}) \mathbf{Y}_n^{m,\dagger} = (n+m+1) \mathbf{Y}_{n-1}^{m,\dagger}$ $(m=1,\ldots,n)$, i.e., the hypercomplex differentiation of a basis function delivers a multiple of another basis function;
\item The polynomials $\mathbf{X}^{n+1,\dagger}_n$ and $\mathbf{Y}^{n+1,\dagger}_n$ are hyperholomorphic constants.
\end{enumerate}

\medskip

The main results in Sections 4 and 5 need a series of technical preparations. As a matter of fact, in the following we will start by stating pointwise estimates of the basis polynomials (see \cite{GueJoaoComplexVariables}). We end this section by presenting an orthogonal decomposition for square integrable $\mathcal{A}$-monogenic functions, a fundamental tool on the development of the last sections.

\begin{Proposition} \label{modulusHMP}
For $n \in \mathbb{N}_0$ the homogeneous monogenic polynomials $(\ref{HMP})$ satisfy the following inequalities:
\begin{eqnarray*}
|\mathbf{X}^{l,\dagger}_n(x)| & \leq & \frac{1}{2} (n+1) \sqrt{\frac{(n+1+l)!}{(n+1-l)!}} \, |x|^n \\
|\mathbf{Y}^{m,\dagger}_n(x)| & \leq & \frac{1}{2} (n+1) \sqrt{\frac{(n+1+m)!}{(n+1-m)!}} \, |x|^n,
\end{eqnarray*}
for $l=0,\ldots,n+1$ and $m=1,\ldots,n+1.$
\end{Proposition}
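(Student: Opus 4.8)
Since $\mathbf{X}^{l,\dagger}_n$ and $\mathbf{Y}^{m,\dagger}_n$ are homogeneous of degree $n$, it is enough to bound them on the unit sphere, and there the plan is to pass to spherical coordinates $x_0=\cos\theta$, $x_1=\sin\theta\cos\phi$, $x_2=\sin\theta\sin\phi$, so that the associated Legendre functions are evaluated at $\cos\theta$ and the Chebyshev polynomials at $\cos\phi$ (the $x_0$-axis being covered afterwards by continuity). The first move is to use the trigonometric form $T_k(\cos\phi)=\cos k\phi$ and $\sin\phi\,U_{k-1}(\cos\phi)=\sin k\phi$: the two bracketed $\mathbf{e}_1$--$\mathbf{e}_2$ combinations appearing in $\mathbf{X}^{l,\dagger}_n$ then collapse to the unit vectors $\mathbf{u}_{l+1}:=\cos((l+1)\phi)\mathbf{e}_1+\sin((l+1)\phi)\mathbf{e}_2$ and $\mathbf{v}_{l-1}:=-\cos((l-1)\phi)\mathbf{e}_1+\sin((l-1)\phi)\mathbf{e}_2$, while the scalar part equals $\tfrac12(n+l+1)P^l_n(\cos\theta)\cos l\phi$. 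The same substitutions apply verbatim to $\mathbf{Y}^{m,\dagger}_n$ with $l$ replaced by $m$, so it suffices to treat $\mathbf{X}^{l,\dagger}_n$.

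The key observation is then that $|\mathbf{X}^{l,\dagger}_n|^2=(\mathbf{Sc})^2+|\mathbf{Vec}|^2$ and that $\langle\mathbf{u}_{l+1},\mathbf{v}_{l-1}\rangle=-\cos 2l\phi=1-2\cos^2 l\phi$; hence, for fixed $\theta$, the quantity $|\mathbf{X}^{l,\dagger}_n|^2$ is an \emph{affine} function of $c:=\cos^2 l\phi\in[0,1]$, so its maximum in $\phi$ is attained at $c=0$ or $c=1$. At $c=0$ the vector part organizes into a perfect square, giving $\tfrac1{16}(P^{l+1}_n+(n+l+1)(n+l)P^{l-1}_n)^2$, and at $c=1$ the full expression becomes $\tfrac1{16}(P^{l+1}_n-(n+l+1)(n+l)P^{l-1}_n)^2+\tfrac14(n+l+1)^2(P^l_n)^2$ (all Legendre functions at $\cos\theta$). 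I would then invoke the classical pointwise estimate $(P^k_n(t))^2\le\tfrac12\,(n+k)!/(n-k)!$ for $t\in[-1,1]$, $k\ge1$, together with $(P_n(t))^2\le1$, which follows immediately from Uns\"old's addition theorem $\sum_{m=-n}^{n}\frac{(n-m)!}{(n+m)!}(P^m_n(t))^2=1$; the convention $P^{-1}_n=-\frac{1}{n(n+1)}P^1_n$ keeps the bound consistent when $l=0$. Substituting these in, applying $(a\pm b)^2\le2(a^2+b^2)$, and rewriting every factorial ratio against $A^2:=(n+1+l)!/(n+1-l)!$ by means of $\frac{(n+l)!}{(n-l)!}=\frac{n+1-l}{n+l+1}A^2$, $\frac{(n+l+1)!}{(n-l-1)!}=(n+1-l)(n-l)A^2$ and $\frac{(n+l-1)!}{(n-l+1)!}=\frac{1}{(n+l+1)(n+l)}A^2$, the $c=0$ bound reduces to the true inequality $l^2\le(n+1)(n+2)$ and the $c=1$ bound to $2n+1\le 2n+2$. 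Both hold since $l\le n+1$, yielding $|\mathbf{X}^{l,\dagger}_n(x)|^2\le\tfrac14(n+1)^2\frac{(n+1+l)!}{(n+1-l)!}|x|^{2n}$, and likewise for $\mathbf{Y}^{m,\dagger}_n$.

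The part I expect to be delicate is step two rather than the algebra: one must \emph{not} estimate $|\mathbf{Sc}|+|\mathbf{Vec}|$ by the triangle inequality, which overshoots the stated constant badly (it already fails for $l=1$ and large $n$), and instead exploit the affine dependence on $\cos^2 l\phi$, so that the scalar and vector parts are never simultaneously near their maxima. A secondary nuisance is the handful of degenerate values ($l=0$, $l=1$, $l=n+1$), where either some $P^{l\pm1}_n$ vanishes or the constant in the Legendre estimate is $1$ rather than $\tfrac12$; in each of these the computation only becomes easier, but it should be checked explicitly.
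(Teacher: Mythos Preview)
The paper does not actually prove Proposition~\ref{modulusHMP}; it only \emph{states} the pointwise estimates and refers to \cite{GueJoaoComplexVariables} for the proof, so there is no in-paper argument to compare against. What can be said is that your outline is internally consistent and yields the stated bound: passing to spherical coordinates turns the Chebyshev factors into $\cos k\phi$ and $\sin k\phi$, the two $\mathbf{e}_1$--$\mathbf{e}_2$ brackets become unit vectors whose inner product is $-\cos 2l\phi$, and hence $|\mathbf{X}^{l,\dagger}_n|^2$ is affine in $\cos^2 l\phi$; evaluating at the endpoints and inserting the Uns\"old bound $(P^k_n)^2\le\tfrac12\,(n+k)!/(n-k)!$ reduces the inequality to $l^2\le(n+1)(n+2)$ and $2n+1\le 2n+2$, both trivially true for $0\le l\le n+1$.

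Two small points to tidy up. First, your ``verbatim'' transfer to $\mathbf{Y}^{m,\dagger}_n$ is not literally verbatim: the scalar part carries $\sin m\phi$ rather than $\cos m\phi$, and the inner product of the corresponding unit vectors is $+\cos 2m\phi=1-2\sin^2 m\phi$, so the affine variable becomes $\sin^2 m\phi$ instead of $\cos^2 l\phi$; the endpoint analysis is then identical, but the sentence should be adjusted. Second, the degenerate cases you flag ($l=0$, $l=1$, $l=n+1$) genuinely need the explicit check you promise --- in particular $l=0$ uses $(P_n)^2\le 1$ rather than the $\tfrac12$-bound, and $l=n+1$ kills the $P^l_n$ and $P^{l+1}_n$ terms so only the $P^{l-1}_n$ contribution survives --- but in each case the inequality only becomes slacker, as you anticipate.
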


\medskip

We shall denote by $\mathbf{X}_n^{0,\dagger,\ast}, \mathbf{X}_n^{m,\dagger,\ast}, \mathbf{Y}_n^{m,\dagger,\ast}$ $(m=1,\ldots,n+1)$ the normalized basis functions $\mathbf{X}_n^{0,\dagger}, \mathbf{X}_n^{m,\dagger}, \mathbf{Y}_n^{m,\dagger}$ in $L_2(B;\mathcal{A};\mathbb{R})$. From \cite{JoaoThesis2009} we take the following result.

\begin{Lemma} \label{ONSArbitraryBall}
For each $n$, the set of homogeneous monogenic polynomials
\begin{eqnarray*}
\left\{ \mathbf{X}_n^{0,\dagger,\ast}, \mathbf{X}_n^{m,\dagger,\ast}, \mathbf{Y}_n^{m,\dagger,\ast}: ~ m = 1,...,n+1 \right\}
\end{eqnarray*}
forms an orthonormal basis in $\mathcal{M}^+(B;\mathcal{A};n)$. Consequently, 
\begin{equation} \label{ONS}
\left\{\mathbf{X}_n^{0,\dagger,\ast}, \mathbf{X}_n^{m,\dagger,\ast}, \mathbf{Y}_n^{m,\dagger,\ast}, ~ m =
1,\ldots,n+1; n=0,1,\ldots \right\}
\end{equation}
is an orthonormal basis in $\mathcal{M}^+(B;\mathcal{A})$.
\end{Lemma}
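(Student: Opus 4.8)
The plan is to reduce the statement to the two properties established in the preceding list, namely homogeneity/monogenicity (property 1), orthogonality of polynomials of different degree (property 2), and a dimension count coming from \cite{Leutwiler2001}. First I would fix $n$ and observe that all of $\mathbf{X}_n^{0,\dagger},\mathbf{X}_n^{m,\dagger},\mathbf{Y}_n^{m,\dagger}$ ($m=1,\dots,n+1$) are homogeneous monogenic polynomials of degree $n$, hence belong to $\mathcal{M}^+(B;\mathcal{A};n)$, a space of real dimension $2n+3$. The list of candidates has exactly $1+(n+1)+(n+1)=2n+3$ members, so it suffices to prove that after normalization they form an \emph{orthonormal} set; linear independence then follows automatically and, by the dimension equality, they span $\mathcal{M}^+(B;\mathcal{A};n)$ and thus form an orthonormal basis.

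The core computation is therefore the verification that the pairwise $L_2(B;\mathcal{A};\mathbb{R})$-inner products of the (un-normalized) polynomials vanish for distinct indices within the fixed degree $n$, so that the normalization constants $\|\mathbf{X}_n^{0,\dagger}\|^{-1}$, etc., turn them into an orthonormal family. I would compute $<\mathbf{f},\mathbf{g}>$ in spherical coordinates, writing $x_0/|x|=\cos\theta$ and $x_1/\sqrt{x_1^2+x_2^2}=\cos\varphi$, $x_2/\sqrt{x_1^2+x_2^2}=\sin\varphi$, so that the $|x|^n$ factors combine with the radial Jacobian $r^2\,dr$ to give a common constant $\int_0^1 r^{2n+2}\,dr$, and the angular integral factorizes into a $\theta$-integral over products of associated Legendre functions $P_n^l P_n^{l'}$ (weighted suitably) and a $\varphi$-integral over products of Chebyshev polynomials $T_l,T_{l'},U_{l-1},U_{l'-1}$, plus the scalar-part term $\mathbf{Sc}(\overline{\mathbf{f}}\mathbf{g})$ which mixes the scalar and the $\mathbf{e}_1,\mathbf{e}_2$ components. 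The orthogonality relations $\int_{-1}^1 P_n^l(t)P_n^{l'}(t)\,dt$ (vanishing for $l\neq l'$, up to the normalization of associated Legendre functions of \emph{equal} degree) together with the standard orthogonality of $T_l$ against $T_{l'}$ and of $U_l$ against $U_{l'}$ on $[-1,1]$ with their respective weights kill all cross terms; the $\mathbf{X}$--$\mathbf{Y}$ cross products vanish because the $\mathbf{X}$-polynomials involve $T_l$ in the scalar slot while the $\mathbf{Y}$-polynomials involve $x_2 U_{m-1}/\sqrt{x_1^2+x_2^2}$ there, and $\int_0^{2\pi} T_l(\cos\varphi)\,\sin\varphi\,U_{m-1}(\cos\varphi)\,d\varphi=0$ by parity in $\varphi\mapsto -\varphi$.

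The main obstacle is bookkeeping rather than a conceptual difficulty: one must keep track of the three vector components simultaneously, of the different normalizing prefactors $\tfrac{n+l+1}{2}$, $\tfrac14$, $\tfrac14(n+l+1)(n+l)$ attached to $P_n^{l-1},P_n^l,P_n^{l+1}$, and of the conventions $P_n^{-1}=-\tfrac{1}{n(n+1)}P_n^1$ and $P_n^l\equiv 0$ for $l\ge n+1$, so that the boundary indices $l=0$, $l=n$, $l=n+1$ do not produce spurious terms. I would organize the calculation by splitting $\mathbf{Sc}(\overline{\mathbf{f}}\mathbf{g})$ into the scalar-scalar, $\mathbf{e}_1$-$\mathbf{e}_1$, and $\mathbf{e}_2$-$\mathbf{e}_2$ contributions (the mixed $\mathbf{e}_1$-$\mathbf{e}_2$ pieces drop under $\varphi\mapsto-\varphi$), and within each use a single identity for products of associated Legendre functions together with the three-term relations linking $U_{l-1},U_l,U_{l+1}$ and $T_{l}$. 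Finally, once orthonormality of the fixed-degree family is in hand, the passage to the full system \eqref{ONS} is immediate: property 2 gives orthogonality across different $n$, each normalized polynomial has unit norm, and completeness in $\mathcal{M}^+(B;\mathcal{A})=\overline{\bigoplus_n \mathcal{M}^+(B;\mathcal{A};n)}$ follows from the density of polynomial $(R)$-solutions, so \eqref{ONS} is an orthonormal basis of $\mathcal{M}^+(B;\mathcal{A})$.
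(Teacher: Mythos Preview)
The paper does not prove this lemma; immediately before the statement it says ``From \cite{JoaoThesis2009} we take the following result,'' and no argument follows. So there is no in-paper proof to compare your proposal against. Your overall strategy---dimension count $\dim\mathcal{M}^+(B;\mathcal{A};n)=2n+3$ from \cite{Leutwiler2001}, explicit orthogonality within a fixed degree via a spherical-coordinate computation, then property~2 for different degrees and completeness via density of polynomial $(R)$-solutions---is the natural one and matches what is done in the references \cite{JoaoThesis2009,GueJoaoComplexVariables,DissCacao} that the paper cites.

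One correction is needed in your sketch. The claim that $\int_{-1}^1 P_n^l(t)P_n^{l'}(t)\,dt$ vanishes for $l\neq l'$ is false: associated Legendre functions of the \emph{same degree} and different orders are not orthogonal on $[-1,1]$ with constant weight (e.g.\ $\int_{-1}^1 P_2^0 P_2^2\,dt\neq 0$); the relevant second orthogonality relation carries the weight $(1-t^2)^{-1}$ and excludes order $0$. Within a fixed degree $n$ the orthogonality in fact comes entirely from the $\varphi$-integration: since $T_l(\cos\varphi)=\cos l\varphi$ and $\sin\varphi\,U_{l-1}(\cos\varphi)=\sin l\varphi$, every component of $\mathbf{X}_n^{l,\dagger}$ carries a factor $\cos l\varphi$ or $\cos(l\pm1)\varphi$, and similarly $\mathbf{Y}_n^{m,\dagger}$ carries $\sin m\varphi$ or $\sin(m\pm1)\varphi$; the standard trigonometric orthogonality on $[0,2\pi]$ then kills all cross terms between different $l$'s (and between $\mathbf{X}$'s and $\mathbf{Y}$'s by the cosine/sine parity you already noted). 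The $\theta$-integral over products of Legendre functions only contributes the normalization constants. With this adjustment your plan goes through; the Legendre orthogonality in the \emph{degree} index is what underlies property~2, not the intra-degree orthogonality.
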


Based on the complete orthonormal system $(\ref{ONS})$ the Fourier series of $\mathbf{f}$ with respect to the referred system in $\mathcal{M}^+(B;\mathcal{A};n)$ is defined by
\begin{eqnarray} \label{FourierSeries}
\mathbf{f} = \sum_{n=0}^{\infty} \left[ \mathbf{X}_n^{0,\dagger,\ast} a_n^0 + \sum_{m=1}^{n+1}
\left(\mathbf{X}_n^{m,\dagger,\ast} a_n^m + \mathbf{Y}_n^{m,\dagger,\ast} b_n^m \right) \right],
\end{eqnarray}
where for each $n \in \mathbb{N}_0$, $a_n^0, a_n^m, b_n^m \in \mathbb{R}$ $(m=1,\ldots,n+1)$ are the associated (real-valued) Fourier coefficients.

Using the fact that the polynomials $\mathbf{X}_n^{n+1,\dagger}$ and $\mathbf{Y}_n^{n+1,\dagger}$ are hyperholomorphic constants (see Property 9 above), in \cite{GueJoaoBohr2} it is proved that each $\mathcal{A}$-valued monogenic function can be decomposed in an orthogonal sum of a monogenic "main part" of the function $(\mathbf{g})$ and a hyperholomorphic constant $(\mathbf{h})$.

\begin{Lemma} {\rm (Decomposition Theorem)} \label{Tdecomposition}
A function $\mathbf{f} \in \mathcal{M}^+(B;\mathcal{A})$ can be decomposed into
\begin{eqnarray} \label{decomposition}
\mathbf{f} := \mathbf{f}(0) + \mathbf{g} + \mathbf{h}, 
\end{eqnarray}
where the functions $\mathbf{g}$ and $\mathbf{h}$ have the Fourier series
\begin{eqnarray*}
\mathbf{g}(x) &=& \sum_{n=1}^{\infty} \left(\mathbf{X}_n^{0,\dagger,\ast}(x) a_n^0 + \sum_{m=1}^{n} \left[
\mathbf{X}_n^{m,\dagger,\ast} (x) a_n^m + \mathbf{Y}_n^{m,\dagger,\ast}(x) b_n^m \right] \right) \\
\mathbf{h}(\underline{x}) &=& \sum_{n=1}^{\infty} \left[ \mathbf{X}_n^{n+1,\dagger,\ast}(\underline{x}) a_n^{n+1} 
+ \mathbf{Y}_n^{n+1,\dagger,\ast}(\underline{x}) b_n^{n+1} \right] .
\end{eqnarray*}
\end{Lemma}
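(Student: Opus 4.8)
\medskip
\noindent\emph{Sketch of the intended proof.} The plan is to read the decomposition straight off the Fourier expansion $(\ref{FourierSeries})$ of $\mathbf{f}$ with respect to the orthonormal basis $(\ref{ONS})$ supplied by Lemma \ref{ONSArbitraryBall}, and then to check that the three pieces so obtained have the claimed form. First I would single out the term $n=0$: the homogeneous monogenic polynomials of degree zero are exactly the constants in $\mathcal{A}$, so this term is a constant; and since, by Property 3 of Section 3, every basis polynomial of degree $n\ge 1$ vanishes at the origin, evaluating $(\ref{FourierSeries})$ at $x=0$ — legitimate because that series converges locally uniformly in $B$ by Proposition \ref{modulusHMP} — shows that this constant equals $\mathbf{f}(0)$. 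For $n\ge 1$ I would then split the inner sum $\sum_{m=1}^{n+1}$ into the range $m=1,\dots,n$ and the single top index $m=n+1$, collecting the former together with the scalar contributions $\mathbf{X}_n^{0,\dagger,\ast}a_n^0$ into $\mathbf{g}$ and the latter into $\mathbf{h}$. This reproduces verbatim the two series written in the statement, so what remains is to show that each subseries is again an element of $\mathcal{M}^+(B;\mathcal{A})$ and that $\mathbf{h}$ is a hyperholomorphic constant depending on $\underline{x}$ only.

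For the convergence I would argue by orthonormality: the partial sums of $\mathbf{g}$ and of $\mathbf{h}$ are orthogonal projections of partial sums of $\mathbf{f}$ onto subfamilies of $(\ref{ONS})$, hence Cauchy in $L_2(B;\mathcal{A};\mathbb{R})$ by Bessel's inequality, so both series converge in $L_2$. To upgrade this to something usable pointwise — and, in the next step, differentiable term by term — I would invoke Proposition \ref{modulusHMP}: combined with $\sum_{n,m}(|a_n^m|^2+|b_n^m|^2)<\infty$, the stated growth bounds yield normal convergence of $\mathbf{g}$, of $\mathbf{h}$, and of their formal $\overline{D}$-derivatives on every closed ball $\overline{B_r(0)}$ with $r<1$. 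Consequently $\mathbf{g}$ and $\mathbf{h}$ are monogenic, $\mathbf{g}+\mathbf{h}=\mathbf{f}-\mathbf{f}(0)$ on $B$, and $\tfrac12\overline{D}$ may be applied termwise to either series.

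Finally I would identify $\mathbf{h}$. By Property 9 each $\mathbf{X}_n^{n+1,\dagger}$ and $\mathbf{Y}_n^{n+1,\dagger}$ is a hyperholomorphic constant, so the termwise application of $\tfrac12\overline{D}$ gives $(\tfrac12\overline{D})\mathbf{h}=0$; equivalently, Property 8 shows $(\tfrac12\overline{D})\mathbf{g}=(\tfrac12\overline{D})\mathbf{f}$, whence $\mathbf{h}=\mathbf{f}-\mathbf{f}(0)-\mathbf{g}$ has identically vanishing hypercomplex derivative. That $\mathbf{h}$ depends on $\underline{x}$ only follows from the explicit formulas of Section 3: since $P_n^{l}\equiv 0$ for $l\ge n+1$, in $\mathbf{X}_n^{n+1,\dagger}$ and $\mathbf{Y}_n^{n+1,\dagger}$ only the terms carrying $P_n^{n}(x_0/|x|)$ survive, and $|x|^n P_n^n(x_0/|x|)$ is a homogeneous polynomial in $x_1,x_2$; hence $\mathbf{h}=\mathbf{h}(\underline{x})$, which yields the decomposition $(\ref{decomposition})$.

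I expect the only genuine work to sit in the convergence step: verifying that the two rearranged subseries — and, crucially, their termwise hypercomplex derivatives — converge locally uniformly, so that pushing $\tfrac12\overline{D}$ through the sums is justified (a Weierstrass-type argument for monogenic series built on Proposition \ref{modulusHMP}). Everything else is bookkeeping with the Fourier coefficients together with Properties 3, 8 and 9 and Lemma \ref{ONSArbitraryBall}.
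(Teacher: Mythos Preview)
The paper does not actually prove Lemma~\ref{Tdecomposition}; it only quotes the result from \cite{GueJoaoBohr2}, so there is no in-paper argument to compare against. Your sketch is correct and is exactly the natural route --- split the Fourier expansion~(\ref{FourierSeries}) into the $n=0$ piece, the indices $1\le m\le n$, and the top index $m=n+1$, then invoke Properties~3, 8 and~9 together with Lemma~\ref{ONSArbitraryBall} --- and this is almost certainly how \cite{GueJoaoBohr2} proceeds as well.

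One minor caution on the convergence step: Proposition~\ref{modulusHMP} bounds the \emph{unnormalized} polynomials, and the factor $\sqrt{(n+1+l)!/(n+1-l)!}$ grows rapidly in $l$, so combining it with mere square-summability of the Fourier coefficients requires knowing the $L_2$-norms of the $\mathbf{X}_n^{l,\dagger}$, $\mathbf{Y}_n^{m,\dagger}$ explicitly (they are in \cite{DissCacao}) before you can conclude normal convergence. A cleaner and reference-free way to justify the termwise application of $\tfrac12\overline{D}$ is to note that an $L_2$-convergent sequence of monogenic functions on $B$ converges locally uniformly together with all derivatives (each component is harmonic, so interior mean-value or Cauchy-type estimates apply); since the partial sums of $\mathbf{g}$ and $\mathbf{h}$ are orthogonal projections of those of $\mathbf{f}$, they are $L_2$-Cauchy, and the rest follows. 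With that adjustment your argument is complete.
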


\section{Bohr's phenomenon}

After the above preparations, we are now ready to study Bohr's phenomenon using the previous Fourier expansion. We begin with the simple case of monogenic functions with $\mathbf{f}(0)=\mathbf{0}$. This first version of a quaternionic Bohr type inequality was considered initially by the authors in \cite{GueJoao22007}, the Bohr radius was calculated and it was shown that for $r<0.047$ the desired inequality is satisfied. The main purpose of this first paper was to check only whether this theorem could be extended into the context of quaternionic analysis. It is interesting to note that the results on the Bohr radius depend essentially on the choice of the Fourier expansion in the proof, and consequently yield on the pointwise estimates of the homogeneous monogenic polynomials. 
The following combination of Proposition \ref{modulusHMP} and Corollary 10 in \cite{GueJoao22007} refines the value of Bohr radius in \cite{GueJoao22007}.

\begin{Corollary}
Let $\mathbf{f}$ be a square integrable ${\mathcal A}$-valued monogenic function with $\mathbf{f}(0)=\mathbf{0}$ and $|\mathbf{f}(x)|<1$ 
in $B$ and let
\begin{eqnarray*}
\sum_{n=1}^{\infty} \left[ \mathbf{X}_n^{0,\dagger,\ast} a_n^{0} 
+ \sum_{m=1}^{n+1} \left( \mathbf{X}_n^{m,\dagger,\ast} a_n^{m} + \mathbf{Y}_n^{m,\dagger,\ast} b_n^{m} \right) \right]
\end{eqnarray*}
be its Fourier expansion. Then
\begin{eqnarray*}
\sum_{n=1}^{\infty} \left| \mathbf{X}_n^{0,\dagger,\ast} a_n^{0} 
+ \sum_{m=1}^{n+1} \left( \mathbf{X}_n^{m,\dagger,\ast} a_n^{m} + \mathbf{Y}_n^{m,\dagger,\ast} b_n^{m} \right)\right|<1
\end{eqnarray*}
holds in the ball $\{x: |x|=r<0.125\}$.
\end{Corollary}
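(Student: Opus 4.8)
The plan is to mimic the classical one–variable Bohr argument: first control the Fourier coefficients of $\mathbf{f}$ by a single quantity (here $\sup_{B}|\mathbf{f}|$), then bound the majorant series $\sum_{n\ge 1}\big|\mathbf{X}_n^{0,\dagger,\ast}a_n^{0}+\sum_{m=1}^{n+1}(\mathbf{X}_n^{m,\dagger,\ast}a_n^{m}+\mathbf{Y}_n^{m,\dagger,\ast}b_n^{m})\big|$ term by term, using the pointwise estimates of Proposition \ref{modulusHMP}, and finally compare with a geometric series to read off the radius. The starting point is Corollary~10 of \cite{GueJoao22007}, which (under the normalization $\mathbf{f}(0)=\mathbf{0}$, $|\mathbf{f}|<1$ in $B$) provides a bound on the Fourier coefficients $a_n^0,a_n^m,b_n^m$; the point of the present Corollary is simply that plugging the \emph{sharper} pointwise bounds of Proposition \ref{modulusHMP} into that same coefficient estimate improves the admissible radius from $0.047$ to $0.125$.

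Concretely, I would proceed as follows. First, estimate the $n$-th term of the majorant: by the triangle inequality and Proposition \ref{modulusHMP},
\begin{eqnarray*}
\left| \mathbf{X}_n^{0,\dagger,\ast} a_n^{0} + \sum_{m=1}^{n+1} \left( \mathbf{X}_n^{m,\dagger,\ast} a_n^{m} + \mathbf{Y}_n^{m,\dagger,\ast} b_n^{m} \right)\right|
\;\le\; C_n\, r^{n},
\end{eqnarray*}
where $C_n$ collects the normalization constants of the basis polynomials, the degree-independent factors $\tfrac12(n+1)\sqrt{(n+1\pm l)!/(n+1\mp l)!}$ from Proposition \ref{modulusHMP}, and the coefficient bound coming from Corollary~10 of \cite{GueJoao22007}. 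Second, sum over $n\ge 1$: one shows $\sum_{n\ge1}C_n r^n$ is dominated by a convergent geometric-type series and that this dominating series is $<1$ precisely when $r<0.125$. Third, since the majorant dominates the series for $|\mathbf{f}|$ on any sphere $|x|=r$, the claimed inequality follows on $\{x:|x|=r<0.125\}$.

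The main obstacle is the bookkeeping in Step~2: the factors $\sqrt{(n+1+m)!/(n+1-m)!}$ grow with $m$, and for each fixed $n$ there are $2n+2$ such terms (the $a_n^m$ and $b_n^m$ for $m=1,\dots,n+1$, plus the $a_n^0$ term), so one must carry out the inner sum over $m$ carefully — presumably bounding it by its largest term times $2n+2$, or by a Cauchy–Schwarz step using $\sum_m|a_n^m|^2\le(\text{coefficient bound})$ — before summing the resulting expression in $n$ against $r^n$. Getting a clean closed form (or a numerically verifiable bound) for $\sum_{n\ge1}C_n r^n$ and checking that its unique positive root of "$=1$" is indeed $\ge 0.125$ is where the actual work lies; everything else is the standard Bohr-majorant scheme. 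I expect the normalization constants of $\mathbf{X}_n^{l,\dagger},\mathbf{Y}_n^{m,\dagger}$ (their $L_2(B)$-norms, available from \cite{JoaoThesis2009}) to partially cancel the factorial growth from Proposition \ref{modulusHMP}, which is exactly what makes the improved radius possible.
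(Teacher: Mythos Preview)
Your proposal is correct and follows essentially the same approach as the paper: the paper's proof is the single sentence ``The proof is analogous to the one of Theorem~4 from \cite{GueJoao22007} with the estimates stated in Proposition~\ref{modulusHMP}'', which is exactly the scheme you describe (reuse the coefficient/majorant machinery from \cite{GueJoao22007} and replace the old pointwise bounds by the sharper ones of Proposition~\ref{modulusHMP} to push the radius from $0.047$ to $0.125$). The only minor discrepancy is that the paper cites Theorem~4 of \cite{GueJoao22007} for the proof template rather than Corollary~10 (the latter is mentioned in the preamble to the statement), but this does not affect the argument.
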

\begin{proof}
The proof is analogous to the one of Theorem 4 from \cite{GueJoao22007} with the estimates stated in Proposition \ref{modulusHMP}.
\end{proof}

Before we deal with the generalization of the latter inequality to a more general class of monogenic functions let us recall the  complex situation. The absolute value is taken from all summands of the same degree $n$. In the complex case this is also a first important step. All the considered functions with $f(0)=0$ are orthogonal to the constants. This property can be used to estimate all Fourier coefficients of a general holomorphic function with $|f(z)| \le 1$ by the first Fourier coefficient (see, e.g., \cite{VGD2002}). In the quaternionic context the previous arguments have to be modified since the set of ''constants'' is much bigger. Consequently, for our purposes we repeat here the arguments from \cite{GueJoaoBohr2} with appropriate modifications and amplifications. If we understand constants as monogenic functions which have an identically vanishing hypercomplex derivative, then it is immediately clear that the constant function and all monogenic functions which depend only on $x_1$ and $x_2$ are the hyperholomorphic constants. Moreover, if we, as in this paper, consider only $\mathcal{A}$-valued functions then a non-trivial hyperholomorphic constant can  have values only in $\mbox{span}_\mathbb{R}\{{\bf e_1, e_2}\}$. With these observations it seems to be natural to extend Bohr's phenomenon to all monogenic functions with $|\mathbf{f}(x)|<1$ in $B$ which are orthogonal to the subspace of the non-trivial hyperholomorphic constants in $L_2(B;\mathcal{A};\mathbb{R})$. This approach is also supported by the fact that it is shown in Lemma \ref{Tdecomposition} that each monogenic function can be decomposed in an orthogonal sum of a monogenic "main part" of the function and a hyperholomorphic constant.

In the following we state a collection of inequalities related to Bohr's phenomenon. All these results follow from the fundamental property of the system (\ref{HMP}) that the scalar parts of the basis functions are also orthogonal.

\begin{Lemma} \label{Fouriercoeff}
Let $\mathbf{f}$ be an $\mathcal{A}$-valued monogenic function such that $\mathbf{f}(x)-\mathbf{f}(0)$ is orthogonal to the hyperholomorphic constants with respect to the real-inner product $(\ref{InnerProduct})$ with $|\mathbf{f}(x)|<1$ in $B$. Then its Fourier coefficients with respect to $(\ref{FourierSeries})$ satisfy the following inequalities 
\begin{eqnarray*}
|a_n^{l}| &\leq& \max_{B} |\mathbf{Sc}(\mathbf{X}_n^{l,\dagger})| \frac{\|\mathbf{X}_n^{l,\dagger} \|_{L_2(B;\mathcal{A};\mathbb{R})}}{\|\mathbf{Sc}(\mathbf{X}_n^{l,\dagger}) \|_{L_2(B)}^2} \, 2 \sqrt{\frac{\pi}{3}} \left(2 \sqrt{\frac{\pi}{3}} - a_0^0\right) \\
|b_n^{m}| &\leq& \max_{B} |\mathbf{Sc}(\mathbf{Y}_n^{m,\dagger})| \frac{\|\mathbf{Y}_n^{m,\dagger} \|_{L_2(B;\mathcal{A};\mathbb{R})}}{\|\mathbf{Sc}(\mathbf{Y}_n^{m,\dagger}) \|_{L_2(B)}^2} \, 2 \sqrt{\frac{\pi}{3}} \left(2 \sqrt{\frac{\pi}{3}} - a_0^0\right),
\end{eqnarray*}
for $l=0,...,n$ and $m=1,...,n$.
\end{Lemma}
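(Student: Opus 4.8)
The plan is to mimic the classical complex argument where one estimates the $n$-th Fourier coefficient of a bounded holomorphic function by the value at the origin, using that a function vanishing at $0$ is orthogonal to the constants. Here the role of ``orthogonality to constants'' is played by the two hypotheses: $\mathbf{f}(x)-\mathbf{f}(0)$ is orthogonal to the hyperholomorphic constants (which kills the contributions of the top-degree pieces $\mathbf{X}_n^{n+1,\dagger}$, $\mathbf{Y}_n^{n+1,\dagger}$, allowing us to work with $l,m\le n$), and the scalar parts of the basis polynomials are themselves mutually orthogonal in $L_2(B)$ (Property 4 in Section 3). First I would fix an index, say the coefficient $a_n^l$ with $1\le l\le n$, and express it via the real inner product: $a_n^l = \langle \mathbf{f}, \mathbf{X}_n^{l,\dagger,\ast}\rangle_{L_2(B;\mathcal{A};\mathbb{R})} / \|\mathbf{X}_n^{l,\dagger,\ast}\|$ — but since these are normalized it is cleaner to write $a_n^l \,\|\mathbf{X}_n^{l,\dagger}\|_{L_2}^{-1}$ in terms of the un-normalized $\mathbf{X}_n^{l,\dagger}$. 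The key reduction is to pass from the $\mathcal{A}$-valued inner product $\int_B \mathbf{Sc}(\overline{\mathbf{f}}\,\mathbf{X}_n^{l,\dagger})\,dV$ to a purely scalar pairing $\int_B [\mathbf{f}(x)]_0\,\mathbf{Sc}(\mathbf{X}_n^{l,\dagger})\,dV$, which is legitimate precisely because the vector parts of the $\mathbf{X}_n^{l,\dagger}$ are $L_2$-orthogonal to the vector parts of everything of the same degree appearing in the expansion of $\mathbf{f}$; alternatively one invokes that $\mathbf{Sc}(\mathbf{X}_n^{l,\dagger})$ alone already reproduces $a_n^l$ up to the normalization factor $\|\mathbf{X}_n^{l,\dagger}\|_{L_2(B;\mathcal{A};\mathbb{R})}/\|\mathbf{Sc}(\mathbf{X}_n^{l,\dagger})\|_{L_2(B)}$, which is where that ratio in the statement comes from.

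Next I would bound the scalar integral. Writing $u := [\mathbf{f}]_0 = \mathbf{Sc}(\mathbf{f})$, one has $\big|\int_B u\,\mathbf{Sc}(\mathbf{X}_n^{l,\dagger})\,dV\big| \le \max_B|\mathbf{Sc}(\mathbf{X}_n^{l,\dagger})|\cdot \int_B |u - c|\,dV$ for any constant $c$ that is ``invisible'' to $\mathbf{Sc}(\mathbf{X}_n^{l,\dagger})$, i.e. any $c$ with $\int_B \mathbf{Sc}(\mathbf{X}_n^{l,\dagger})\,dV = 0$, which holds for $n\ge 1$ since $\mathbf{X}_n^{l,\dagger}$ is a homogeneous polynomial of positive degree and hence has mean value zero over $B$. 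So subtracting a constant is free, and the natural choice relates to $a_0^0$: the scalar part of $\mathbf{f}$ has constant Fourier term governed by $\mathbf{X}_0^{0,\dagger,\ast}$, whose normalization involves $\sqrt{3/\pi}$ type constants (the $L_2(B)$-norm of the constant scalar function $1$ is $\sqrt{|B|} = \sqrt{4\pi/3}$, giving the recurring factor $2\sqrt{\pi/3}$). Thus I would take $c = a_0^0 \cdot \mathbf{X}_0^{0,\dagger,\ast} = a_0^0/\sqrt{4\pi/3}$, so that $\int_B|u-c|\,dV$ is being estimated. Using $|\mathbf{f}(x)|<1$ we get $|u(x)|\le 1$ pointwise, hence $|u(x)-c| \le 1 + |c|$ is too crude; instead one uses the subharmonicity/mean-value type bound: $\int_B |u-c|\,dV \le \big(\int_B(1-u)\,dV\big)\cdot(\text{something})$ — more precisely, since $|u|<1$, $u - c \le 1 - a_0^0\sqrt{3/(4\pi)}$ in an averaged sense, and one arrives at the factor $\big(2\sqrt{\pi/3} - a_0^0\big)$ after inserting the normalization constant $2\sqrt{\pi/3}$ of the zeroth basis function.

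Putting these together yields
\begin{eqnarray*}
|a_n^l| \;\le\; \max_B|\mathbf{Sc}(\mathbf{X}_n^{l,\dagger})|\;\frac{\|\mathbf{X}_n^{l,\dagger}\|_{L_2(B;\mathcal{A};\mathbb{R})}}{\|\mathbf{Sc}(\mathbf{X}_n^{l,\dagger})\|_{L_2(B)}^2}\;2\sqrt{\frac{\pi}{3}}\Big(2\sqrt{\frac{\pi}{3}} - a_0^0\Big),
\end{eqnarray*}
and the identical argument with $\mathbf{Y}_n^{m,\dagger}$ in place of $\mathbf{X}_n^{l,\dagger}$ gives the bound on $|b_n^m|$. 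The main obstacle I anticipate is the bookkeeping of normalization constants: getting the exact ratio $\|\mathbf{X}_n^{l,\dagger}\|_{L_2}/\|\mathbf{Sc}(\mathbf{X}_n^{l,\dagger})\|_{L_2}^2$ to appear (rather than, say, its square or reciprocal) requires carefully tracking whether one pairs $\mathbf{f}$ against the normalized or un-normalized polynomial and whether the scalar-projection step costs one or two powers of $\|\mathbf{Sc}(\mathbf{X}_n^{l,\dagger})\|_{L_2(B)}$. The conceptual content — orthogonality of scalar parts plus a Borel--Carath\'eodory/Schwarz-type estimate of $u$ against its mean — is straightforward; the delicate part is confirming that $a_0^0$ (the genuine Fourier coefficient of $\mathbf{f}$, not of $u$) is the correct quantity that enters, which follows because $\mathbf{X}_0^{0,\dagger,\ast}$ is scalar-valued so that $a_0^0$ coincides with the corresponding scalar Fourier coefficient of $u$.
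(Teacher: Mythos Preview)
Your overall strategy --- project onto the scalar parts of the basis polynomials (Property~4), subtract a constant that is invisible to $\mathbf{Sc}(\mathbf{X}_n^{l,\dagger})$ for $n\ge 1$, then bound the resulting integral --- matches the paper's approach. The bookkeeping you worry about (the ratio $\|\mathbf{X}_n^{l,\dagger}\|/\|\mathbf{Sc}(\mathbf{X}_n^{l,\dagger})\|^2$ and the identification of $a_0^0$ with the scalar Fourier coefficient of $u$) is fine.

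There is, however, a genuine gap in the step where you estimate $\int_B |u-c|\,dV$. You set $c$ equal to the \emph{mean} of $u$, namely $c = a_0^0/\sqrt{4\pi/3} = \mathbf{Sc}(\mathbf{f}(0))$, and then appeal to an unspecified ``subharmonicity/mean-value type bound'' to control $\int_B |u-c|\,dV$ by $\int_B(1-u)\,dV$. No such inequality is available in general: with $c$ equal to the mean, $\int_B|u-c|\,dV$ need not be dominated by $|B|(1-c)$, since the pointwise bound $|u-c|\le 1-c$ would force $u\ge 2c-1$, which is not implied by $|u|<1$.

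The paper's fix is simply to choose the \emph{other} natural constant, $c=1$. Because $|\mathbf{f}(x)|<1$ forces $u(x)=\mathbf{Sc}(\mathbf{f}(x))<1$ pointwise, one has $|1-u|=1-u$ and therefore
\[
\int_B |1-u|\,|\mathbf{Sc}(\mathbf{X}_n^{l,\dagger})|\,dV \;\le\; \max_B|\mathbf{Sc}(\mathbf{X}_n^{l,\dagger})|\int_B(1-u)\,dV,
\]
with the last integral computed exactly as $\tfrac{4\pi}{3}-2\sqrt{\pi/3}\,a_0^0 = 2\sqrt{\pi/3}\bigl(2\sqrt{\pi/3}-a_0^0\bigr)$. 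So the missing idea is \emph{positivity} of $1-u$, not a mean-value argument; once you swap your choice of $c$ for $c=1$, the rest of your outline goes through verbatim. (The paper also tacitly assumes $\mathbf{Sc}(\mathbf{f}(0))\ge 0$, replacing $\mathbf{f}$ by $-\mathbf{f}$ if necessary, so that $2\sqrt{\pi/3}-a_0^0$ is meaningful as a bound; you should note this reduction as well.)
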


Lemma \ref{Fouriercoeff} allows us to deduce the following theorem as a refinement of Theorem 11 in \cite{GueJoaoBohr2}.

\begin{Theorem}
Let $\mathbf{f}$ be an $\mathcal{A}$-valued monogenic function such that $\mathbf{f}(x)-\mathbf{f}(0)$ is orthogonal to the hyperholomorphic constants with respect to the real-inner product $(\ref{InnerProduct})$ with $|\mathbf{f}(x)|<1$ in $B$ and let
\begin{equation*}
\sum_{n=0}^{\infty} \left[ \mathbf{X}_n^{0,\dagger,\ast} a_n^{0} + \sum_{m=1}^{n} \left( \mathbf{X}_n^{m,\dagger,\ast} a_n^{m}
+ \mathbf{Y}_n^{m,\dagger,\ast} b_n^{m} \right) \right]
\end{equation*}
be its Fourier expansion. Then
\begin{equation*}
\sum_{n=0}^{\infty} \left[ |\mathbf{X}_n^{0,\dagger,\ast}| |a_n^{0}| + \sum_{m=1}^{n} \left( |\mathbf{X}_n^{m,\dagger,\ast}| |a_n^{m}| + |\mathbf{Y}_n^{m,\dagger,\ast}| |b_n^{m}| \right) \right] < 1
\end{equation*}
holds in the ball of radius $r$, with $0 \leq r <0.026$.
\end{Theorem}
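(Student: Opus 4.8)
The plan is to transcribe the classical one–variable Bohr argument: use Lemma~\ref{Fouriercoeff} to dominate the Fourier coefficients of $\mathbf{f}$ in terms of $a_0^0$, use Proposition~\ref{modulusHMP} to dominate the pointwise sizes of the normalized basis polynomials, and so reduce the inequality to a single scalar power–series estimate in $r=|x|$. As preliminary reductions, write $\mathbf{f}=\mathbf{f}(0)+\mathbf{g}+\mathbf{h}$ as in Lemma~\ref{Tdecomposition}; the hypothesis that $\mathbf{f}(x)-\mathbf{f}(0)=\mathbf{g}+\mathbf{h}$ is orthogonal to the hyperholomorphic constants, together with the fact that $\mathbf{h}$ is itself one, forces $\|\mathbf{h}\|^2=\langle\mathbf{g}+\mathbf{h},\mathbf{h}\rangle=0$, i.e.\ $\mathbf{h}\equiv\mathbf{0}$, which is why only the indices $m=1,\dots,n$ occur in the expansion. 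Since $\mathbf{X}_0^{0,\dagger}$ is a constant function, $\mathbf{X}_0^{0,\dagger,\ast}\equiv c^{-1}$ with $c:=2\sqrt{\pi/3}$ and $\mathbf{f}(0)=a_0^0\,\mathbf{X}_0^{0,\dagger,\ast}$, so $|\mathbf{f}(x)|<1$ gives $|a_0^0|<c$; replacing $\mathbf{f}$ by $-\mathbf{f}$ (which alters neither the hypotheses nor the left–hand side of the claim) we may assume $a_0^0\ge0$.

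For the coefficientwise estimate, fix $n\ge1$ and an admissible index $l$. Multiplying the bound $|\mathbf{X}_n^{l,\dagger,\ast}(x)|\le\frac12(n+1)\sqrt{(n+1+l)!/(n+1-l)!}\;r^n\,\|\mathbf{X}_n^{l,\dagger}\|^{-1}$ from Proposition~\ref{modulusHMP} by the bound for $|a_n^l|$ from Lemma~\ref{Fouriercoeff}, the norm $\|\mathbf{X}_n^{l,\dagger}\|$ cancels, leaving an estimate involving only $\max_B|\mathbf{Sc}(\mathbf{X}_n^{l,\dagger})|$ and $\|\mathbf{Sc}(\mathbf{X}_n^{l,\dagger})\|_{L_2(B)}$ (and likewise for the $\mathbf{Y}_n^{m,\dagger}$–terms). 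Since $\mathbf{Sc}(\mathbf{X}_n^{l,\dagger})(x)=|x|^n\tfrac{n+l+1}{2}P_n^l(\tfrac{x_0}{|x|})T_l(\tfrac{x_1}{\sqrt{x_1^2+x_2^2}})$, passing to spherical coordinates and using the orthogonality relations for $P_n^l$ and $T_l$ yields
\[
\|\mathbf{Sc}(\mathbf{X}_n^{l,\dagger})\|_{L_2(B)}^2=\frac{\varepsilon_l\,\pi\,(n+l+1)^2}{2(2n+3)(2n+1)}\,\frac{(n+l)!}{(n-l)!},\qquad \max_B|\mathbf{Sc}(\mathbf{X}_n^{l,\dagger})|=\tfrac12(n+l+1)\max_{[-1,1]}|P_n^l|,
\]
with $\varepsilon_0=2$ and $\varepsilon_l=1$ for $l\ge1$. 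Inserting the classical bound $\max_{[-1,1]}|P_n^l|\le\sqrt{(n+l)!/(n-l)!}$ (the spherical–harmonic addition theorem), the factorials cancel and each single summand contributes to the coefficient of $c(c-a_0^0)\,r^n$ a quantity of polynomial size in $n$; summing the $2n+1$ summands of degree $n$ produces explicit constants $\gamma_n$ of polynomial growth with
\[
\sum_{n\ge0}\Bigl[|\mathbf{X}_n^{0,\dagger,\ast}|\,|a_n^0|+\sum_{m=1}^n\bigl(|\mathbf{X}_n^{m,\dagger,\ast}|\,|a_n^m|+|\mathbf{Y}_n^{m,\dagger,\ast}|\,|b_n^m|\bigr)\Bigr]\le\frac{a_0^0}{c}+c\,(c-a_0^0)\,\Sigma(r),\qquad \Sigma(r):=\sum_{n\ge1}\gamma_n r^n .
\]

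It remains to optimize. The right–hand side is affine in $a_0^0$ with slope $c^{-1}-c\,\Sigma(r)$, which is positive exactly when $\Sigma(r)<c^{-2}=3/(4\pi)$; in that case its supremum over $a_0^0\in[0,c)$ is the limiting value $c\cdot c^{-1}=1$, so the left–hand side is $<1$. As $\Sigma$ has polynomially growing coefficients it converges on $|r|<1$, and estimating its first few terms explicitly while majorizing the tail by a geometric series shows $\Sigma(r)<3/(4\pi)$ for $0\le r<0.026$, which is the assertion.

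The delicate point is the conjunction of the coefficient bound and this last numerical step: the crude Legendre estimate used above already yields a Bohr radius of the correct order of magnitude, but to reach the stated value $0.026$ one has to keep the sharp $l$–dependence (the exact ratio $\tfrac{n+1+l}{n+1-l}$ rather than its worst value, and the true order of $\max_{[-1,1]}|P_n^l|$), and then solve $\Sigma(r)=3/(4\pi)$ with care, e.g.\ by truncating $\Sigma$ after a fixed index and controlling the remainder.
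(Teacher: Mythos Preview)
Your approach is essentially the paper's: the authors' proof simply refers back to Theorem~11 of \cite{GueJoaoBohr2} (whose coefficient bounds are restated here as Lemma~\ref{Fouriercoeff}) and upgrades the pointwise estimates to those of Proposition~\ref{modulusHMP}, which is exactly the combination you carry out in detail. The only soft spot is the final numerical step---you describe how to obtain $0.026$ rather than actually exhibiting the $\gamma_n$ and checking $\Sigma(0.026)<3/(4\pi)$---but this matches the level of detail in the paper itself, which also delegates that computation.
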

\begin{proof}
For the proof see Theorem 11 from \cite{GueJoaoBohr2} together with the estimates stated in Proposition \ref{modulusHMP}.
\end{proof}

If we compare Corollary 4.1 and Theorem 4.1 it becomes visible that we discussed two different generalizations of Bohr's theorem. In the case of Corollary 4.1 we collect all summands of degree $n$ and take then the modulus and in Theorem 4.1 we take the modulus of each single summand. Both ways generalize the complex case but lead to very different estimates of the Bohr radius.

\section{Extensions of Bohr's inequality}

The above scheme can be also applied to the hypercomplex derivative of $\mathcal{A}$-valued monogenic functions. By the next assertion we obtain an estimate for $|(\frac{1}{2} \overline{D}) \mathbf{f}|$ in terms of $|\mathbf{Sc}\{\mathbf{f}(0)\}|$ and the supremum of $|\mathbf{f}(x)|$ on $B$. A starting point for the following investigation is that $\mathcal{A}$-valued monogenic functions satisfy a maximum principle. Therefore, the function
\begin{eqnarray*}
\mathcal{M}_{\mathbf{f}}(r) = \sup_{|\xi| < r} |\mathbf{f}(\xi)|, \,\,\,\,\,\, 0 \leq r < 1
\end{eqnarray*}
is well defined, monotonically increasing whenever $\mathbf{f}$ is non-constant and continuous. We shall also introduce the notation
\begin{eqnarray*}
\mathcal{M}(\mathbf{f},r) = \max_{|x| = r} |\mathbf{f}(x)|, \,\,\,\,\,\, 0 \leq r < 1.
\end{eqnarray*}
This function of $r$ is called the maximum modulus function of $\mathbf{f}$.

\begin{Theorem} \label{EstimatesDerivatives}
Let $\mathbf{f}$ be a square integrable $\mathcal{A}$-valued monogenic function in $B$. Then, for $0\leq r< 1$ we have the inequality
\begin{eqnarray*}
\mathcal{M}\left((\frac{1}{2} \overline{D}) \mathbf{f},r\right) \, \leq \, \frac{8 (3r+1)}{(1-r)^5} \left(\mathcal{M}_{\mathbf{f}}(1) - |\mathbf{Sc}\{\mathbf{f}(0)\}| \right).
\end{eqnarray*}
\end{Theorem}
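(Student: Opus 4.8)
The plan is to combine the orthogonal decomposition of Lemma \ref{Tdecomposition} with the action of the hypercomplex derivative on the basis functions (Property 8) and the pointwise estimates of Proposition \ref{modulusHMP}, exactly in the spirit of the Bohr-type arguments above. First I would write the Fourier expansion $(\ref{FourierSeries})$ of $\mathbf{f}$ and apply the hypercomplex derivative term-by-term: by Property 8,
\begin{eqnarray*}
(\tfrac{1}{2}\overline{D})\mathbf{f} = \sum_{n=1}^{\infty}\left[ (n+1)\,\mathbf{X}_{n-1}^{0,\dagger,\ast'} a_n^0 + \sum_{m=1}^{n}(n+m+1)\left(\mathbf{X}_{n-1}^{m,\dagger,\ast'} a_n^m + \mathbf{Y}_{n-1}^{m,\dagger,\ast'} b_n^m\right)\right],
\end{eqnarray*}
where the primed symbols denote the appropriate rescaling of the normalized polynomials caused by passing from degree $n$ to degree $n-1$ (the hyperholomorphic-constant terms $m=n+1$ drop out). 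The key point is that the hypercomplex derivative only ever sees the "main part" $\mathbf{g}$ in the decomposition $(\ref{decomposition})$, so without loss of generality only the coefficients $a_n^l, b_n^m$ with $l,m\le n$ enter — precisely the ones controlled by Lemma \ref{Fouriercoeff}.

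Next I would estimate $|(\tfrac12\overline{D})\mathbf{f}(x)|$ for $|x|=r$ by passing to absolute values, inserting the pointwise bound $|\mathbf{X}_{n-1}^{l,\dagger}(x)|,|\mathbf{Y}_{n-1}^{m,\dagger}(x)| \le \tfrac12 n \sqrt{(n+l)!/(n-l)!}\,r^{n-1}$ from Proposition \ref{modulusHMP} (shifted index), together with the normalization constants $\|\mathbf{X}_n^{l,\dagger}\|,\|\mathbf{Y}_n^{m,\dagger}\|$ and the coefficient bounds from Lemma \ref{Fouriercoeff}. Each coefficient $|a_n^l|$ is bounded by a quantity of the form $C_n^l\cdot 2\sqrt{\pi/3}\,(2\sqrt{\pi/3}-a_0^0)$; recognizing that $2\sqrt{\pi/3} = \|1\|_{L_2(B)}$ and that $a_0^0 = \langle \mathbf{f},\mathbf{X}_0^{0,\dagger,\ast}\rangle$ relates to $\mathbf{Sc}\{\mathbf{f}(0)\}$, the factor $2\sqrt{\pi/3}\,(2\sqrt{\pi/3}-a_0^0)$ should be bounded by a constant times $\big(\mathcal{M}_{\mathbf{f}}(1) - |\mathbf{Sc}\{\mathbf{f}(0)\}|\big)$, using $|\mathbf{f}|\le \mathcal{M}_{\mathbf{f}}(1)$ in $B$. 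After collecting all $n$-dependent factors — $(n+m+1)$ from differentiation, $\tfrac12 n\sqrt{(n+l)!/(n-l)!}\,r^{n-1}$ from the polynomial bound, the reciprocal norms, and $\max_B|\mathbf{Sc}(\cdot)|$ — one is left with a numerical series in $r$ that must be summed in closed form and shown to equal (or be dominated by) $8(3r+1)/(1-r)^5$. I would sum the $m$ (resp. $l$) index first for fixed $n$, then the $n$-series, expecting geometric-type series with polynomial weights whose sums produce the $(1-r)^{-5}$ factor and the $(3r+1)$ numerator; differentiating $\sum r^n = (1-r)^{-1}$ several times and taking suitable combinations is the mechanism.

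The main obstacle is the bookkeeping of constants: one must verify that the combined factor
\begin{eqnarray*}
(n+m+1)\cdot \max_{B}|\mathbf{Sc}(\mathbf{X}_n^{m,\dagger})|\cdot \frac{\|\mathbf{X}_n^{m,\dagger}\|_{L_2(B;\mathcal{A};\mathbb{R})}}{\|\mathbf{Sc}(\mathbf{X}_n^{m,\dagger})\|_{L_2(B)}^2}\cdot \tfrac12 n\sqrt{\tfrac{(n+m)!}{(n-m)!}}
\end{eqnarray*}
(and its $\mathbf{Y}$-analogue), summed over $m=1,\dots,n$ and multiplied by $r^{n-1}$, yields a series majorized by $8(3r+1)/(1-r)^5$ after extracting the $(\mathcal{M}_{\mathbf{f}}(1)-|\mathbf{Sc}\{\mathbf{f}(0)\}|)$ factor. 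This requires the explicit values (or sharp estimates) of $\max_B|\mathbf{Sc}(\cdot)|$ and of the $L_2$-norms of the scalar parts and of the full polynomials — available from \cite{JoaoThesis2009,GueJoaoComplexVariables} — and a careful check that the factorial ratios cancel enough of the growth so that only a power of $(1-r)^{-1}$ survives; getting the sharp constant $8$ and the exact numerator $3r+1$ (rather than a cruder bound) is where the delicacy lies. Finally, taking the maximum over $|x|=r$ on the left and noting all bounds are uniform in the direction of $x$ gives the stated inequality for $\mathcal{M}\big((\tfrac12\overline{D})\mathbf{f},r\big)$.
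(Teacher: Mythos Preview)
Your overall architecture is exactly the paper's: decompose via Lemma~\ref{Tdecomposition} so that the derivative only sees the ``main part'' $\mathbf{g}$, differentiate term by term using Property~8, insert the pointwise bounds of Proposition~\ref{modulusHMP}, bound the Fourier coefficients $a_n^l,b_n^m$ ($l,m\le n$) by a Borel--Carath\'eodory type argument, and sum the resulting numerical series to $\tfrac{4}{3}\sum_{n\ge 1}n^2(n+1)(n+2)r^{n-1}=8(3r+1)/(1-r)^5$.

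There is, however, a genuine gap at the coefficient-estimate step. You invoke Lemma~\ref{Fouriercoeff} directly, but that lemma is stated under the hypotheses $|\mathbf f|<1$ and ``$\mathbf f-\mathbf f(0)$ orthogonal to hyperholomorphic constants'', neither of which is assumed in Theorem~\ref{EstimatesDerivatives}. Your proposed remedy --- that $2\sqrt{\pi/3}\,(2\sqrt{\pi/3}-a_0^0)$ ``should be bounded by a constant times $\mathcal{M}_{\mathbf f}(1)-|\mathbf{Sc}\{\mathbf f(0)\}|$'' --- goes the wrong way: since $a_0^0=2\sqrt{\pi/3}\,\mathbf{Sc}\{\mathbf f(0)\}$, that factor equals $\tfrac{4\pi}{3}\big(1-\mathbf{Sc}\{\mathbf f(0)\}\big)$, and with $\mathcal{M}_{\mathbf f}(1)\le 1$ one has $1-\mathbf{Sc}\{\mathbf f(0)\}\ge \mathcal{M}_{\mathbf f}(1)-\mathbf{Sc}\{\mathbf f(0)\}$, not $\le$. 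The paper avoids this by not quoting Lemma~\ref{Fouriercoeff} at all: it rederives the coefficient bounds from scratch, integrating $\mathbf{Sc}(\mathcal{M}_{\mathbf f}(1)-\mathbf f)\ge 0$ against the (mutually orthogonal) scalar parts of the basis polynomials. This works for the full $\mathbf f$ without any orthogonality assumption, because the hyperholomorphic constants $\mathbf X_n^{n+1,\dagger},\mathbf Y_n^{n+1,\dagger}$ have zero scalar part and hence drop out of $\mathbf{Sc}(\mathbf f)$ automatically, and it produces the factor $\tfrac{4\pi}{3}\big(\mathcal{M}_{\mathbf f}(1)-\mathbf{Sc}\{\mathbf f(0)\}\big)$ directly. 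An equivalent fix in your framework would be to first normalize, applying Lemma~\ref{Fouriercoeff} to $\mathbf f/\mathcal{M}_{\mathbf f}(1)$ and then scaling back; but you must make that rescaling explicit, and you must also observe (as above) why the orthogonality hypothesis of Lemma~\ref{Fouriercoeff} is immaterial for the coefficients with $l,m\le n$.
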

\begin{proof}
If $\mathcal{M}_{\mathbf{f}}(1)=\infty$ then the statement of the theorem is obviously satisfied. If  $\mathcal{M}_{\mathbf{f}}(1)<\infty$ then we consider $\mathbf{f}$ written as in $(\ref{decomposition})$, take into account the maximum modulus principle and obtain
\begin{eqnarray*}
\left|(\frac{1}{2} \overline{D}) \mathbf{f}(x)\right| \, = \, \left|(\frac{1}{2} \overline{D}) \mathbf{g}(x)\right|.
\end{eqnarray*}

We start by writing $\mathbf{g}$ as Fourier series
\begin{eqnarray*}
\mathbf{g} \, = \, \sum_{n=1}^{\infty} \left[ \mathbf{X}_n^{0,\dagger,\ast} a_n^0 + \sum_{m=1}^{n} \left( \mathbf{X}_n^{m,\dagger,\ast} a_n^m + \mathbf{Y}_n^{m,\dagger,\ast} b_n^m \right) \right] ,
\end{eqnarray*}
where for each $n \in \mathbb{N}_0$, $a_n^0, a_n^m, b_n^m \in \mathbb{R}$ $(m=1,...,n)$ are the associated Fourier coefficients. Since the referred series is convergent in $L_2(B)$, it converges uniformly to $\mathbf{g}$ in each compact subset of $B$. The series of all partial derivatives also converges uniformly to the corresponding partial derivatives of $\mathbf{g}$ in compact subsets of $B$. 

Applying the hypercomplex derivative $\frac{1}{2}\overline{D}$ to the series, using Property 8 of the basis polynomials, and taking the modulus it follows formally
\begin{eqnarray*}
\left|(\frac{1}{2} \overline{D}) \mathbf{g}(x) \right| &\leq& 
\frac{1}{\sqrt{2 \pi}} \sum_{n=1}^{\infty} \sqrt{2n+3} \, n |x|^{n-1} \left[ \sqrt{n+1} |a_n^0| \right. \\ 
&+& \left. \sum_{m=1}^{n-1} \sqrt{\frac{(n+1)^2-m^2}{n+1}} \left( |a_n^m| + |b_n^m| \right) \right].
\end{eqnarray*}

The main idea of the proof is to find relations between the general Fourier coefficients and $\mathcal{M}_{\mathbf{f}}(1) - |\textbf{Sc}\{\mathbf{f}(0)\}|$. By construction, the associated Fourier coefficients are real-valued and it becomes clear that the scalar part of $\mathbf{f}$ is given by
\begin{equation*}
\textbf{Sc}(\mathbf{f}) = \sum_{n=0}^{\infty} \left\{ \textbf{Sc}(\mathbf{X}_{n}^{0,\dagger,\ast}) a_n^0 
+ \sum_{m=1}^{n} \left[ \textbf{Sc}(\mathbf{X}_{n}^{m,\dagger,\ast}) a_n^m + \textbf{Sc} (\mathbf{Y}_{n}^{m,\dagger,\ast}) 
b_n^m \right] \right\}
\end{equation*}
and, since the scalar parts of the basis polynomials are homogeneous, the value of $\textbf{Sc}(\mathbf{f})$ at the origin is equal to
\begin{eqnarray*}
\textbf{Sc}(\mathbf{f})(0) &=& \sum_{n=0}^{\infty} \left\{ \textbf{Sc}(\mathbf{X}_{n}^{0,\dagger,\ast})(0) a_n^0 \right.\\ 
&+& \left. \sum_{m=1}^{n} \left[ \textbf{Sc}(\mathbf{X}_{n}^{m,\dagger,\ast})(0) a_n^m + \textbf{Sc} (\mathbf{Y}_{n}^{m,\dagger,\ast})(0) b_n^m \right] \right\} \, = \, \frac{1}{2} \sqrt{\frac{3}{\pi}} a_0^0.
\end{eqnarray*}

Without loss of generality we assume that $\textbf{Sc}(\mathbf{f})(0)$ is positive (otherwise we work with 
$-\textbf{Sc}(\mathbf{f})$). Multiplying both sides of the expression
\begin{eqnarray} \label{equation}
\textbf{Sc} (\mathcal{M}_{\mathbf{f}}(1) - \mathbf{f}) = \mathcal{M}_{\mathbf{f}}(1) - \textbf{Sc} (\mathbf{f})
\end{eqnarray}
by the homogeneous harmonic polynomials (see Property 4)
\begin{eqnarray*}\label{solidsphericalharmonics}
\{\textbf{Sc}(\mathbf{X}_n^{0,\dagger,\ast}), \textbf{Sc}(\mathbf{X}_n^{m,\dagger,\ast}), \textbf{Sc}(\mathbf{Y}_{n}^{m,\dagger,\ast}) : m=1,...,n\}
\end{eqnarray*}
and integrating over $B$, we get the desired relations. For simplicity we just present the proof for the coefficients of $\mathbf{X}_n^{0,\dagger,\ast}$, i.e, $a_n^0$. In particular, multiplying both sides of $(\ref{equation})$ by $\mathbf{Sc}(\mathbf{X}_k^{0,\dagger,\ast})$ $(k = 1,...)$ and integrating over the
ball, we obtain
\begin{eqnarray*}
 \int_{B} \mathbf{Sc} (\mathcal{M}_{\mathbf{f}}(1) - \mathbf{f}) \mathbf{Sc}(\mathbf{X}_k^{0,\dagger,\ast}) dV = - \sum_{n=0}^{\infty} a_n^0 \int_{B} \mathbf{Sc}(\mathbf{X}_n^{0,\dagger,\ast}) 
\mathbf{Sc}(\mathbf{X}_k^{0,\dagger,\ast}) dV \\
{\Longleftrightarrow} -a_k^0 = \frac{1}{\|\mathbf{Sc}(\mathbf{X}_k^{0,\dagger,\ast}) \|_{L_2(B)}^2 } 
\int_{B} \mathbf{Sc} (\mathcal{M}_{\mathbf{f}}(1) - \mathbf{f}) \mathbf{Sc}(\mathbf{X}_k^{0,\dagger,\ast}) dV.
\end{eqnarray*}

We shall relate now the integral on the right-hand side of the previous equality with $\mathcal{M}_{\mathbf{f}}(1) - |\textbf{Sc}\{\mathbf{f}(0)\}|$. Applying the modulus in the previous equation it follows
\begin{eqnarray*}
|a_k^0| &\leq& \frac{1}{\|\mathbf{Sc}(\mathbf{X}_k^{0,\dagger,\ast}) \|_{L_2(B)}^2} 
\int_{B} |\mathbf{Sc}(\mathcal{M}_{\mathbf{f}}(1) - \mathbf{f})| |\mathbf{Sc}(\mathbf{X}_k^{0,\dagger,\ast})| dV \\
&\leq& \frac{1}{\|\mathbf{Sc}(\mathbf{X}_k^{0,\dagger,\ast})\|_{L_2(B)}^2} \max_{B} 
|\mathbf{Sc}(\mathbf{X}_k^{0,\dagger,\ast})| \int_{B} \mathbf{Sc} (\mathcal{M}_{\mathbf{f}}(1) - \mathbf{f}) dV \\
&=& \max_{B} |\mathbf{Sc}(\mathbf{X}_k^{0,\dagger})| \frac{\|\mathbf{X}_k^{0,\dagger} \|_{L_2(B;\mathcal{A};\mathbb{R})}}{\|\mathbf{Sc}(\mathbf{X}_k^{0,\dagger}) \|_{L_2(B)}^2} \, \frac{4 \pi}{3} \left(\mathcal{M}_{\mathbf{f}}(1) - \mathbf{Sc}\{\mathbf{f}(0)\} \right).
\end{eqnarray*}

The last step follows from the relation
\begin{eqnarray*}
2 \sqrt{\frac{\pi}{3}} \mathcal{M}_{\mathbf{f}}(1) - a_0^0 &=& 2 \sqrt{\frac{\pi}{3}} \mathcal{M}_{\mathbf{f}}(1) \, 
- <\mathbf{X}_0^{0,\dagger,\ast},\mathbf{f} >_{L_2(B;\mathcal{A};\mathbb{R})} \\
&=& 2 \sqrt{\frac{\pi}{3}} \mathcal{M}_{\mathbf{f}}(1) \, - \int_{B} \frac{1}{2} \sqrt{\frac{3}{\pi}} ~ \mathbf{Sc}(\mathbf{f}) 
dV \\
&=& \frac{1}{2} \sqrt{\frac{3}{\pi}} \int_{B} \mathbf{Sc} (\mathcal{M}_{\mathbf{f}}(1) - \mathbf{f}) dV.
\end{eqnarray*}

Accordingly, we have found the desired relations for the coefficients $a_k^0$. The remaining coefficients $a_k^m$ and 
$b_k^m$ $(m=1,...,k)$ are obtained in a similar way. We can then state the following results:
\begin{eqnarray*}
|a_k^p| &\leq& \max_{B} |\mathbf{Sc}(\mathbf{X}_k^{p,\dagger})| \frac{\|\mathbf{X}_k^{p,\dagger} \|_{L_2(B;\mathcal{A};\mathbb{R})}}{\|\mathbf{Sc}(\mathbf{X}_k^{p,\dagger}) \|_{L_2(B)}^2} \, \frac{4 \pi}{3} \left(\mathcal{M}_{\mathbf{f}}(1) - \mathbf{Sc}\{\mathbf{f}(0)\} \right)
\end{eqnarray*}
\begin{eqnarray*}
|b_k^p| &\leq& \max_{B} |\mathbf{Sc}(\mathbf{Y}_k^{p,\dagger})| \frac{\|\mathbf{Y}_k^{p,\dagger} \|_{L_2(B;\mathcal{A};\mathbb{R})}}{\|\mathbf{Sc}(\mathbf{Y}_k^{p,\dagger}) \|_{L_2(B)}^2} \, \frac{4 \pi}{3} \left(\mathcal{M}_{\mathbf{f}}(1) - \mathbf{Sc}\{\mathbf{f}(0)\} \right),
\end{eqnarray*}
for $p=1,...,k$. In addition, the norms estimates of the basis polynomials (see \cite{DissCacao}) and of their corresponding scalar parts (see \cite{GueJoaoBohr2}) can be used for a simplification of the previous inequalities. Finally, with some extensive calculations we obtain
\begin{eqnarray*}
\left|(\frac{1}{2} \overline{D}) \mathbf{g}(x)\right| \, \leq \, \frac{4}{3} \left(\mathcal{M}_{\mathbf{f}}(1) - \mathbf{Sc}\{\mathbf{f}(0)\} \right) \sum_{n=1}^{\infty} |x|^{n-1} n^2 (n+1) (n+2).
\end{eqnarray*}
\end{proof}

Theorem \ref{EstimatesDerivatives} has a number of interesting corollaries. In the following, we state one of them.

\begin{Corollary}
Let $\tilde{\mathbf{f}}$ be a square integrable $\mathcal{A}$-valued monogenic function in $B$ orthogonal to the non-trivial hyperholomorphic constants with respect to the real-inner product $(\ref{InnerProduct})$. Then, for $0\leq r< 1$ we have the inequality
\begin{eqnarray*}
\mathcal{M}\left((\frac{1}{2} \overline{D}) \tilde{\mathbf{f}},r\right) \, \leq \, \frac{8 (3r+1)}{(1-r)^5} \left(\mathcal{M}_{\tilde{\mathbf{f}}}(1) - |\tilde{\mathbf{f}}(0)| \right).
\end{eqnarray*}
\end{Corollary}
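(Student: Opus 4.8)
The plan is to read the statement off directly from Theorem~\ref{EstimatesDerivatives}. That theorem already applies to \emph{any} square integrable $\mathcal{A}$-valued monogenic function in $B$, so applying it to $\tilde{\mathbf{f}}$ gives
\begin{eqnarray*}
\mathcal{M}\left((\tfrac{1}{2}\overline{D})\tilde{\mathbf{f}},r\right)\le \frac{8(3r+1)}{(1-r)^{5}}\left(\mathcal{M}_{\tilde{\mathbf{f}}}(1)-|\mathbf{Sc}\{\tilde{\mathbf{f}}(0)\}|\right).
\end{eqnarray*}
Hence the only new content of the corollary is the replacement of $|\mathbf{Sc}\{\tilde{\mathbf{f}}(0)\}|$ by $|\tilde{\mathbf{f}}(0)|$, and for that it is enough to show that the orthogonality hypothesis forces $\tilde{\mathbf{f}}(0)$ to be a real (scalar) quaternion, i.e.\ $\mathbf{Vec}(\tilde{\mathbf{f}}(0))=\mathbf{0}$.

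To establish this I would expand $\tilde{\mathbf{f}}$ in the Fourier series $(\ref{FourierSeries})$. Since all basis polynomials of degree $n\ge 1$ vanish at the origin (Property~3), the value at $0$ reduces to $\tilde{\mathbf{f}}(0)=a_{0}^{0}\,\mathbf{X}_{0}^{0,\dagger,\ast}+a_{0}^{1}\,\mathbf{X}_{0}^{1,\dagger,\ast}+b_{0}^{1}\,\mathbf{Y}_{0}^{1,\dagger,\ast}$. Here $\mathbf{X}_{0}^{0,\dagger}$ is the scalar (trivial) constant, while by Property~9 together with the discussion in Section~4 the polynomials $\mathbf{X}_{0}^{1,\dagger}$ and $\mathbf{Y}_{0}^{1,\dagger}$ are non-trivial hyperholomorphic constants — in fact nonzero constant multiples of $\mathbf{e}_{1}$ and $\mathbf{e}_{2}$. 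By hypothesis $\tilde{\mathbf{f}}$ is orthogonal to these, so the orthonormality of $(\ref{ONS})$ yields $a_{0}^{1}=b_{0}^{1}=0$, whence $\tilde{\mathbf{f}}(0)=a_{0}^{0}\,\mathbf{X}_{0}^{0,\dagger,\ast}\in\mathbb{R}$ and $|\tilde{\mathbf{f}}(0)|=|\mathbf{Sc}\{\tilde{\mathbf{f}}(0)\}|$. (Equivalently, since $[\tilde{\mathbf{f}}]_{1},[\tilde{\mathbf{f}}]_{2}$ are harmonic, $\langle\tilde{\mathbf{f}},\mathbf{e}_{j}\rangle_{L_{2}(B;\mathcal{A};\mathbb{R})}=\int_{B}[\tilde{\mathbf{f}}]_{j}\,dV=|B|\,[\tilde{\mathbf{f}}]_{j}(0)=0$ for $j=1,2$.) One also notes that the same orthogonality forces the hyperholomorphic-constant part $\mathbf{h}$ in Lemma~\ref{Tdecomposition} to vanish (indeed $\langle\tilde{\mathbf{f}},\mathbf{h}\rangle=\|\mathbf{h}\|^{2}=0$), so $\tilde{\mathbf{f}}=\tilde{\mathbf{f}}(0)+\mathbf{g}$ is precisely of the form treated in the proof of Theorem~\ref{EstimatesDerivatives}, and that estimate carries over without change.

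There is no genuine obstacle here — the argument is essentially bookkeeping. The one point that deserves a little care is the identification of the non-trivial hyperholomorphic constants inside the system $(\ref{HMP})$, namely that $\mathrm{span}_{\mathbb{R}}\{\mathbf{X}_{n}^{n+1,\dagger},\mathbf{Y}_{n}^{n+1,\dagger}:n\ge 0\}$ is exactly that subspace, so that orthogonality to it legitimately annihilates $a_{0}^{1},b_{0}^{1}$ (and all $a_{n}^{n+1},b_{n}^{n+1}$). Once this is in place, the inequality of Theorem~\ref{EstimatesDerivatives} applies verbatim to $\tilde{\mathbf{f}}$ and, using $|\mathbf{Sc}\{\tilde{\mathbf{f}}(0)\}|=|\tilde{\mathbf{f}}(0)|$, delivers the claim.
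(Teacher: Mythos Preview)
Your proposal is correct and follows essentially the same route as the paper: both observe that orthogonality to the non-trivial hyperholomorphic constants $\mathbf{X}_n^{n+1,\dagger},\mathbf{Y}_n^{n+1,\dagger}$ truncates the Fourier expansion so that the inner sum runs only from $m=1$ to $n$, whence $\tilde{\mathbf{f}}(0)=a_0^0\,\mathbf{X}_0^{0,\dagger,\ast}$ is real and Theorem~\ref{EstimatesDerivatives} applies with $|\mathbf{Sc}\{\tilde{\mathbf{f}}(0)\}|=|\tilde{\mathbf{f}}(0)|$. Your write-up is in fact more explicit than the paper's (which simply asserts that ``$\tilde{\mathbf{f}}(0)$ is real'' from the shape of the series), and your alternative mean-value argument for $[\tilde{\mathbf{f}}]_j(0)=0$ is a nice independent check.
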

\begin{proof}
Let $\mathbf{f}$ be written as Fourier series
\begin{eqnarray*}
\tilde{\mathbf{f}} \, = \, \sum_{n=0}^{\infty} \left[ \mathbf{X}_n^{0,\dagger,\ast} a_n^0 + \sum_{m=1}^{n} \left( \mathbf{X}_n^{m,\dagger,\ast} a_n^m + \mathbf{Y}_n^{m,\dagger,\ast} b_n^m \right) \right] ,
\end{eqnarray*}
where for each $n \in \mathbb{N}_0$, $a_n^0, a_n^m, b_n^m \in \mathbb{R}$ $(m=1,...,n)$ are the associated Fourier coefficients. At first, notice that in the previous series the sum which contains the variable $m$ runs now only from $1$ to $n$. This fact expresses the supposed orthogonality to the hyperholomorphic constants $\mathbf{X}^{n+1,\dagger}_n, \mathbf{Y}^{n+1,\dagger}_n$. Also, we recognize from the structure of the series that $\tilde{\mathbf{f}}(0)$ is real.
\end{proof}


\begin{thebibliography}{99}

\bibitem{Aizenberg2000} L. Aizenberg. \textit{Multidimensional analogues of Bohr's theorem on power series}. Proceedings of the American Mathematical Society, Vol. 128, No. 4 (2000), pp. 1147--1155.

\bibitem{AAD2000} L. Aizenberg, A. Aytuna and P. Djakov. \textit{An abstract approach to Bohr's phenomenon}. Proceedings of the American Mathematical Society, Vol. 128, No. 9 (2000), pp. 2611--2619.

\bibitem{AAD2001} L. Aizenberg and N. Tarkhanov. \textit{A Bohr phenomenon for elliptic equations}. Proceedings of the 
London Mathematical Society, Vol. 82, No. 2 (2001) pp. 385--401.

\bibitem{Aizenberg2005} L. Aizenberg. \textit{Generalization of Carath\'eodory's inequality and the Bohr radius for multidimensional power series}. Selected topics in complex analysis, Operator theory Advances and Applications, Vol. 128, Birkh\"{a}user, Basel (2005), pp. 87--94.

\bibitem{BDK2004} C. Beneteau, A. Dahlner and D. Khavinson. \textit{Remarks on the Bohr Theorem}. Computational Methods and Function Theory, Vol. 4, No. 1 (2004), pp. 1--19.

\bibitem{HD1997} H. Boas and D. Khavinson. \textit{Bohr's Power Series Theorem in several variables}. Proceedings of the American Mathematical Society, Vol. 125, No. 10 (1997), pp. 2975--2979.

\bibitem{HBohr1914} H. Bohr. \textit{A theorem concerning power series}. Proceedings of the London Mathematical Society (2) 13 (1914), pp. 1--5.

\bibitem{BDS1982} F. Brackx, R. Delanghe and F. Sommen. \textit{Clifford Analysis}. Pitman Publishing, Boston-London-Melbourne, 1982.

\bibitem{BrackxSommenAcker1994} F. Brackx, F. Sommen and N. Van Acker. \textit{Reproducing Bergman kernels in Clifford Analysis}. Complex Variables Theory and Application, Vol. 24, No. 3-4 (1994), pp. 191--204.

\bibitem{DissCacao} I. Ca{\c c}{\~a}o. \textit{Constructive Approximation by Monogenic polynomials}. Ph.D. thesis, Universidade de Aveiro, 2004.

\bibitem{Davis1963} P. Davis. \textit{Interpolation and Approximation}. Blaisdell Publishing Company, New York, 1963.

\bibitem{DineenTimoney1991} S. Dineen and R. Timoney. \textit{On a problem of H. Bohr}. Bull. Soc. Roy. Sci. Li\'ege, Vol. 60 (1991) pp. 401--404.

\bibitem{GS1989} K. G\"{u}rlebeck and W. Spr\"{o}ssig. \textit{Quaternionic Analysis and Elliptic Boundary Value Problems}. Akademie Verlag, Berlin, 1989.

\bibitem{GS1997} K. G\"{u}rlebeck and W. Spr\"{o}ssig. \textit{Quaternionic Calculus for Engineers and Physicists}. John Wiley and Sons, Chichester, 1997.

\bibitem{GueMal1999} K. G\"{u}rlebeck and H. Malonek. \textit{A hypercomplex derivative of monogenic functions in $\mathbb{R}^{n+1}$ and its applications}, Complex Variables and Elliptic Equations, Vol. 39, No. 3 (1999), pp. 199--228.

\bibitem{GueJoao22007} K. G\"{u}rlebeck and J. Morais. \textit{Bohr's Theorem for monogenic functions}. AIP Conf. Proc. 936 (2007), pp. 750--753.

\bibitem{GueJoaoBohr2} K. G\"{u}rlebeck and J. Morais. \textit{Bohr Type Theorems for Monogenic Power Series}, Computational Methods and Function Theory, Vol. 9, No. 2 (2009), pp. 633-651.

\bibitem{GueJoaoComplexVariables} K. G\"{u}rlebeck and J. Morais. \textit{Real-Part Estimates for Solutions of the Riesz System in $\mathbb{R}^3$}, submitted.

\bibitem{Kaptanoglu2006} H. Kaptanoglu. \textit{Bohr Phenomena for Laplace-Beltrami Operators}, Indagationes Mathematicae, Vol. 17, No. 3 (2006), pp. 407--423.

\bibitem{KaptanogluSadik2005} H. Kaptanoglu and N.N. Sadik. \textit{Bohr Radii of Elliptic Regions}, Russian Journal of Mathematical Physics, Vol. 12, No. 3 (2005), pp. 363--368.

\bibitem{KS1996} V. Kravchenko and M. Shapiro. \textit{Integral Representations for Spatial Models of Mathematical Physics}. Research Notes in Mathematics, Pitman Advanced Publishing Program, London, 1996.

\bibitem{K2003} V. Kravchenko. \textit{Applied quaternionic analysis. Research and Exposition in Mathematics}. Lemgo: Heldermann Verlag, Vol. 28, 2003.

\bibitem{KresinVladimir2007} G. Kresin and V. Maz'ya. \textit{Sharp Real-Part Theorems - A Unified Approach}. Lecture Notes in Mathematics, Vol. 1903, 2007.

\bibitem{Leutwiler2001} H. Leutwiler. \textit{Quaternionic analysis in $\mathbb{R}^{3}$ versus its hyperbolic modification}, Brackx, F., Chisholm, J.S.R. and Soucek, V. (ed.). NATO Science Series II. Mathematics, Physics and Chemistry, vol. 25, Kluwer Academic Publishers, Dordrecht, Boston, London, 2001, pp. 193--211.

\bibitem{MitelmanShapiro1995} I. Mitelman and M. Shapiro. \textit{Differentiation of the Martinelli-Boch\-ner integrals and the notion of hyperderivability}. Mathematische Nach\-richten 172, No. 1 (1995), pp. 211--238.

\bibitem{JoaoThesis2009} J. Morais. \textit{Approximation by homogeneous polynomial solutions of the Riesz system in $\mathbb{R}^3$}. Ph.D. thesis, Bauhaus-Universit\"at Weimar, 2009.

\bibitem{VGD2002} V.I. Paulsen, G. Popescu, and D. Singh. \textit{On Bohr's Inequality}. London Mathematical Society, Vol. 85 (2002), pp. 493--512.

\bibitem{Riesz1958} M. Riesz. \textit{Clifford numbers and spinors}. Inst. Phys. Sci. and Techn. Lect. Ser.: vol. 38. Maryland, 1958.

\bibitem{ShapiroVasilevski11995} M. Shapiro and N. L. Vasilevski. \textit{Quaternionic $\psi$-hyperholomorphic functions, singular operators and boundary value problems I}. Complex Variables, Theory Appl., 1995. 

\bibitem{ShapiroVasilevski21995} M. Shapiro and N. L. Vasilevski. \textit{Quaternionic $\psi$-hyperholomorphic functions, singular operators and boundary value problems II}. Complex Variables, Theory Appl., 1995.

\bibitem{Sud1979} A. Sudbery. \textit{Quaternionic analysis}, Math. Proc. Cambridge Phil. Soc. 85, 1979, pp. 199-225.

\end{thebibliography}
\end{document}